\newtheorem{prop}{Proposition}[section]
\newtheorem{thm}[prop]{Theorem}
\newtheorem{lemma}[prop]{Lemma}
\newtheorem{cor}[prop]{Corollary}
\newtheorem{conj}[prop]{Conjecture}
\newenvironment{defi}
{\stepcounter{prop}{\noindent \bf Definition
\thesection.\arabic{prop}.}
}
\newenvironment{remark}
{\stepcounter{prop}{\noindent \bf Remark
\thesection.\arabic{prop}.}
}
\renewcommand{\proof}[1][]{{\it Proof#1: }}
\renewcommand{\qed}[1][3mm]{\hspace*{\fill} $\Box$ \vspace{#1}}
\newcommand{\ZZ}{{\mathbb Z}}
\newcommand{\CC}{{\mathbb C}}
\newcommand{\PP}{{\mathbf P}}
\newcommand{\FF}{{\mathbf F}}
\renewcommand{\varpi}{\wp}
\newcommand{\tto}{\longrightarrow}
\newcommand{\dfami}{{ \mathcal D}}
\newcommand{\hfami}{{ \mathcal H}}
\newcommand{\lfami}{{ \mathcal L}}
\newcommand{\mfami}{{ \mathcal M}}
\newcommand{\ofami}{{ \mathcal O}}
\newcommand{\cutoff}[1]{}
\newcommand{\labell}[1]{\label{#1}}%\labe{#1}}
\newcommand{\rk}{\operatorname{rk}}
\renewcommand{\span}{\operatorname{span}}
\newcommand{\hsix}{\hfami^{ev}_4(6)}
\newcommand{\piorb}{\pi_1^{orb}}
\begin{document}

\title
{\noindent\center  \bf $\pi_1$ of trigonal loci of strata of\\
abelian differentials\\
}

\author[M: L\"onne]{Michael L\"onne}
\address{Mathematisches Institut der Universit\"at Bayreuth, Universit\"atsstr.\ 30, 95447 Bayreuth, Germany}
\email{michael.loenne@uni-bayreuth.de}

\begin{abstract}
We investigate locally closed subspaces of projectivized strata of abelian differentials
which classify trigonal curves with canonical divisor a multiple of a trigonal divisor.
We describe their orbifold structure using linear systems on Segre-Hirzebruch surfaces
and obtain results for their orbifold fundamental groups.

Most notable among these orbifolds is the connected component 
$\PP\hsix$, the projectivisation
of the space $\hsix$ of abelian differentials on non-hyperelliptic
genus $4$ curves with a single zero of multiplicity 6
providing an even spin structure. 
Its orbifold fundamental group is identified with the quotient of the Artin group
of type $E_8$ by its maximal central subgroup.
\footnotemark
\end{abstract}

%\date{\today}
\maketitle

\setcounter{footnote}{1}

\footnotetext{
This result was previously obtained by Giannini \cite{gia}, and we give an independent proof
using an alternative argument.
}

\markboth{loci of trigonal abelian differentials, \today}
{loci of trigonal abelian differentials, \today}

\pagestyle{myheadings}

%%%%%%%%%%%%%%%%%%%%%%%%%%%%%%%%%%%%%%%%%%%%%
%
%    arXiv  Abstract
%
%%%%%%%%%%%%%%%%%%%%%%%%%%%%%%%%%%%%%%%%%%%%%
% 
% 
% 
% 
% 
% 
% 
% 
% 
% 
% 
%
%%%%%%%%%%%%%%%%%%%%%%%%%%%%%%%%%%
%
%  AMS Classification  14D05 primary (14F35,14J27 secondary). ???????????
%
% Keywords: fundamental group, braid monodromy, abelian differentials
%
%
%
%%%%%%%%%%%%%%%%%%%%%%%%%%%%%%%%%%

\begin{center}
\it
dedicated to the memory of Wolfgang Ebeling 
\end{center}

%%%%%%%%%%%%%%%%%%%%%%%%%%%%%%%%%%%%%%%%%
%%%%%%%%%%%%%%%%%%%%%%%%%%%%%%%%%%
\section{Introduction}

Wolfgang Ebeling was the advisor of my thesis and following that, he provided
constant support and inspiration during the preparation of my habilitation thesis. 
His way to study and to teach singularity theory had an essential impact on the choice
of its topic which aimed at a better understanding of fundamental
groups of discriminant complements in versal unfoldings of hypersurface singularities.

These groups have since been a recurring object in the study of various spaces of curves
and in the present paper we want to use our approach
to the study of moduli spaces $\hfami_g$ of 
abelian differentials, particularly -- regarding the $\hfami_g$ as moduli stacks --
to the study of their orbifold structure.
The $\hfami_g$ classify pairs $C,\varphi$ consisting of a complex algebraic
curve $C$ of genus $g$ and a non-zero section $\varphi\in H^0(C,\omega_C)$
of the canonical bundle $\omega_C$ over $C$.

The associated projectivized moduli spaces $\PP\hfami_g$ classify pairs
$C,D$, where $D$ is an effective divisor of degree $2g-2$ on $C$ given as the 
zero divisor of a differential $\varphi$.

Both spaces decompose naturally into strata $\hfami_g(k_1,...,k_r)$ resp.\ 
$\PP\hfami_g(k_1,...,k_r)$ by the multiplicities of the zeroes of $\varphi$ resp.\
those of the points of $D$. 
It is the celebrated achievement of Kontsevich and Zorich \cite{koz} to find and
characterize the connected components of these strata.

Since then it is an intriguing problem how they relate to the moduli space of curves
$\mfami_g$ with Kontsevich and Zorich putting a special emphasis on the following
two question which they conjectured to have positive answers:
\begin{quotation}
Are the strata orbifold quotients of contractible spaces?
\\[2mm]
Do their fundamental groups relate nicely with mapping class groups? 
\end{quotation}

Addressing the second question Calderon Salter \cite{cs}
successfully described the image of
the monodromy induced by the forgetful map $\hfami_g(k_1,...,k_r)\to \mfami_g$
using techniques primarily from geometric group theory.

A more algebraic geometric approach is taken by Looijenga and Mondello \cite{lm}.
They describe the orbifold fundamental groups for strata of genus $3$ and settle
the first question in the affirmative for most of these strata,
but they express their doubt, whether their orbifold fundamental groups 
can be shown to be commensurable with mapping class groups.
\medskip

We propose the study of a new kind of loci defined in analogy with the purely
hyperelliptic strata $\hfami^{hyp}(2g-2)$ and $\hfami^{hyp}(g-1,g-1)$, which are 
exceptional components of $\hfami_g(2g-2)$ and $\hfami_g(g-1,g-1)$ respectively.
Recall from \cite[Rem. 3]{koz}:
\begin{quote}
Points of $\hfami^{hyp}(2g-2)$ respectively of $\hfami^{hyp}(g-1,g-1)$
are abelian differentials on hyperelliptic curves of genus $g$ which have 
a single zero of multiplicity $2g-2$ invariant under the hyperelliptic involution
respectively a pair of zeroes of orders $g-1$ symmetric to each other with
respect to the hyperelliptic involution.
\end{quote}
Pairs $(C,D)$ in $\PP\hfami^{hyp}(2g-2)$ respectively in $\PP\hfami^{hyp}(g-1,g-1)$
can thus be characterized by the property that $D$ is a multiple of a divisor in the
${g_2}^1$ of $C$ having support in a single point, respectively in a pair of distinct
point.
\medskip

For our generalization we increase the gonality und consider trigonal curves with
canonical divisors a multiple of a divisor in the trigonal linear system.
The precise definition reads as follows:
\medskip

\begin{defi}
\label{def_strict}
A pair $(C,D)\in \PP\hfami_g$ is called \emph{strictly trigonal} if
\begin{enumerate}
\item
$C$ is strictly trigonal, i.e.\ $C$ is not hyperelliptic with a trigonal linear system $g_3^1$,
\item
$D$ is an integral multiple of a divisor $L$ in a trigonal linear system $g_3^1$ of $C$.
\end{enumerate}
The subspace $\PP \hfami^{tri}_{g}$ of $\PP \hfami_g$ of strictly trigonal pairs
is the moduli space of these pairs. 
\end{defi}
\bigskip

We will see below that these spaces are non-empty only if $g=3k+1$ for some $k\geq1$
and only intersect strata of types $(6k)$, $(4k,2k)$ or $(2k,2k,2k)$ with spin structure of the 
parity of $g$. The number of zeroes is $1$, $2$ and $3$ respectively for the differentials.

In these cases we can describe the orbifold structure in very concrete terms
using discriminant complements of suitable linear systems on Hirzebruch surfaces
(later on, we will be more precise and explain all ingredients):

\begin{thm}
\label{orbi}
Loci of strictly trigonal abelian differentials in $\PP \hfami_g$ of genus $g=3k+1$, 
$k\geq1$ can be identified in the orbifold sense as
\[
\begin{array}{ccc}
\PP \hfami^{tri}_{3k+1}(6k) & \cong & (\lfami^k_1 -\dfami_1 ) / \CC^* \\[2mm]
\PP \hfami^{tri}_{3k+1}(4k,2k) & \cong & \lfami^k_{2} -\dfami_2  \\[2mm]
\PP \hfami^{tri}_{3k+1}(2k,2k,2k) & \cong & 
(\lfami^k_{3} -\dfami_3 -\dfami' ) / \CC\rtimes (\CC^*)^2 
\end{array}
\]
where $\lfami^k_i$ are linear subsystems of $|3\sigma_0|$ 
on the Hirzebruch surface $\FF_{k+1}$, the
$\dfami_i$ are the respective discriminant divisors corresponding to singular curves,
and $\dfami'$ is the divisor corresponding to abelian differentials with less than $3$
zeroes.
\end{thm}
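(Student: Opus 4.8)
The plan is to realize a strictly trigonal pair $(C,D)$ explicitly inside a Hirzebruch surface and then to match the moduli data on both sides. A strictly trigonal curve $C$ of genus $g=3k+1$ carries a unique $g_3^1$, and the associated degree-$3$ map $C\to\PP^1$ realizes $C$ as a curve on a Segre--Hirzebruch surface $\FF_n$: the relative canonical sheaf computation shows that $C$ lies in the linear system $|3\sigma_0|$ (the trisection class) and that the genus formula forces $n=k+1$. Conversely, a smooth (or suitably singular) member of $|3\sigma_0|$ on $\FF_{k+1}$ is strictly trigonal with the ruling cutting out its $g_3^1$. So the first step is to set up this dictionary carefully, identifying which curves in $|3\sigma_0|$ arise and how the divisor class $\sigma_0$ restricts to the trigonal divisor $L$ on $C$.

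Next I would compute the canonical system: by adjunction $\omega_C=(K_{\FF_{k+1}}+C)|_C$, and since $K_{\FF_{k+1}}+3\sigma_0$ restricts to a multiple of $\sigma_0|_C=L$ (here one checks the vertical/fiber part of $K_{\FF_{k+1}}+3\sigma_0$ is trivial on $C$, which is exactly where $n=k+1$ enters), one gets $\omega_C\cong\ofami_C(kL)$, i.e. the canonical divisor is $k$ times a trigonal divisor. Thus the abelian differential $\varphi$ with $D=kL$ is, up to scale, the restriction of a section of $\ofami_{\FF_{k+1}}(K+3\sigma_0)$, and the zero pattern of $D$ — a single point of multiplicity $6k$, two points of multiplicities $(4k,2k)$, or three points each of multiplicity $2k$ — corresponds precisely to whether the fiber of $\FF_{k+1}$ through $L$ is totally tangent, or has tangency type $(2,1)$, or is transverse (three distinct points) to $C$. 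This identifies the three strata with the three sublinear systems $\lfami^k_i\subset|3\sigma_0|$ cut out by these incidence/tangency conditions, with $\dfami_i$ the locus of singular members and, in the third case, $\dfami'$ the locus where two of the three zeroes collide (fewer than three zeroes).

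The final step is to pass to moduli, i.e. to quotient by the ambiguity in the construction. The choice of the pair $(C,\varphi)$ from a point of $\lfami^k_i-\dfami_i$ involves: (a) a scaling of $\varphi$, accounting for the $\CC^*$ in the first line; (b) the automorphisms of $\FF_{k+1}$ preserving the relevant configuration — for $\FF_{k+1}$ with $k+1\ge 2$ the automorphism group acts on the base $\PP^1$ and on the fibers, and after fixing the marked point(s) of $L$ one is left with $(\CC^*)^2$ (the torus fixing two/three points on the base together with fiber scaling) in the $(2k,2k,2k)$ case, while in the $(6k)$ case fixing the single totally-ramified fiber point absorbs the torus entirely into the already-present $\CC^*$, and in the $(4k,2k)$ case the stabilizer of the tangency configuration is trivial; (c) in the $(2k,2k,2k)$ case, an extra additive $\CC$ coming from the unipotent part of $\Aut(\FF_{k+1})$ (translations in the fiber direction along $\sigma_0$) that still preserves three transverse fiber points, giving the semidirect product $\CC\rtimes(\CC^*)^2$. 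One then checks that these group actions are free on the discriminant complements (this is where strictness — non-hyperellipticity — and the genericity encoded by removing $\dfami_i,\dfami'$ are used, ensuring no curve in the family has extra automorphisms), so that the quotient stacks are genuine orbifolds and the natural map to $\PP\hfami^{tri}_{g}$ is an isomorphism of orbifolds.

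The main obstacle I expect is step three: pinning down exactly which subgroup of $\Aut(\FF_{k+1})\ltimes\CC^*$ acts, proving the action is free on the complement of the discriminant (equivalently, that a strictly trigonal $(C,D)$ has no automorphisms beyond those visible on $\FF_{k+1}$, which relies on uniqueness of the $g_3^1$ for strictly trigonal curves and hence functoriality of the whole construction), and being careful that the identification is at the level of orbifolds/stacks rather than just coarse spaces — in particular that no generic stabilizers are lost or gained. The Hirzebruch-surface geometry and the adjunction computation in steps one and two are essentially bookkeeping once the numerology $g=3k+1$, $n=k+1$ is in place.
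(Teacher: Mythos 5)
There are two genuine gaps. First, your step one asserts that for a strictly trigonal curve of genus $g=3k+1$ ``the genus formula forces $n=k+1$'' and that $C$ lies in $|3\sigma_0|$. This is false for a trigonal curve per se: the canonical image lies on a scroll $S_{n,m}$ whose invariant $e=n-m$ (the Maroni invariant) varies over all values $0\le e\le k+1$ with $e\equiv g \bmod 2$, and the class of $C$ on $\FF_e$ is $3\sigma_0+(2m-n+2)L$, which equals $3\sigma_0$ only in the maximal case $n=2k$, $m=k-1$. What forces maximality is not the genus but the condition on $D$: writing the hyperplane section of the scroll as $\sigma_H+aL$ and using that $D=2kD'$ is supported on a single fibre, one shows $\sigma_H$ must be disjoint from $C$, whence $a=n=2k$ (this is the paper's Proposition 2.1). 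As written, your argument would ``prove'' that every trigonal curve of genus $3k+1$ has maximal Maroni invariant, which is not true, so the dictionary underlying everything that follows is not established.

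Second, the group bookkeeping in your final step does not work, and this is where the real technical content lies. If $\lfami^k_i$ is taken to be the subsystem of $|3\sigma_0|$ cut out only by the incidence/tangency conditions along a fixed fibre (with fixed point(s)), then the group one must quotient by is the full stabilizer of that configuration in $\Aut(\FF_{k+1})\times\CC^*$, which contains the unipotent transformations $y\mapsto y+b_0+\dots+b_{k+1}x^{k+1}$ and the affine substitutions in $x$; it has dimension about $k+4$, not $1$ (and is certainly not trivial in the $(4k,2k)$ case). A dimension count makes the inconsistency visible: e.g.\ in the $(6k)$ case your tangency system has dimension $6k+6$, so modding out only by $\CC^*$ gives $6k+5$, whereas $\PP\hfami^{tri}_{3k+1}(6k)$ has dimension $5k+2$, which is what the paper's slice $\lfami_1$ (dimension $5k+3$) modulo $\CC^*$ gives. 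The paper bridges exactly this gap by explicit Tschirnhaus-type normalizations performed algebraically in families (its Lemmas 4.1--4.7 and Propositions 4.8--4.10), cutting the tangency loci down to linear slices on which only the small residual groups $\CC^*$, $\{1\}$, $\CC\rtimes(\CC^*)^2$ still act; none of this reduction appears in your proposal, and your claimed stabilizer computations replacing it are incorrect. Two smaller points: adjunction gives $\omega_C\cong\ofami_C(2kL)$, not $kL$ (degrees $6k$ versus $3k$); and you should not claim the residual actions are free --- strictly trigonal pairs with automorphisms exist, and the identification ``in the orbifold sense'' is as global quotients, which neither needs nor has freeness.
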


The same kind of description was obtained in \cite{bl} for the locus of trigonal curves
of maximal Maroni invariant and its orbifold fundamental group -- in fact equal to that of
$\PP \hfami^{tri}_{3k+1}$ -- was determined using unfoldings of isolated plane curve
singularities.

Similarly here, for types $(6k)$ and $(2k,2k,2k)$
we are going to describe the topology of the discriminant 
complements in the theorem with the help of discriminant complements in 
unfoldings of isolated plane curve singularities.
Their fundamental groups, studied in \cite{intern} and \cite{bries} under
the name of \emph{discriminant knot groups}, can be used to express
the orbifold fundamental groups in the present setting:

\begin{cor}
\label{first}
The orbifold fundamental group of $\PP \hfami^{tri}_{3k+1}(6k)$ fits
into a short exact sequence 
\[
1 \quad \tto \quad \ZZ \quad \tto \quad \pi^K(y^3+x^{3k+2})
\quad \tto \quad
\piorb \PP \hfami^{tri}_{3k+1}(6k)
\quad \tto \quad 1
\]
as quotient by a central subgroup of the discriminant knot group 
$\pi^K$ of the singular plane curve germ 
$y^3+x^{3k+2}$.
\end{cor}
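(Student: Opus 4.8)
The plan is to start from the orbifold identification of Theorem~\ref{orbi}, namely
$\PP \hfami^{tri}_{3k+1}(6k) \cong (\lfami^k_1 -\dfami_1 ) / \CC^*$, and to analyze the $\CC^*$-quotient fibration together with a concrete plane-curve model of the linear system $\lfami^k_1 \subset |3\sigma_0|$ on $\FF_{k+1}$. First I would exhibit the total space $\lfami^k_1 - \dfami_1$ as (the smooth part of) the discriminant complement in an unfolding of the plane curve germ $y^3 + x^{3k+2}$: curves in $\lfami^k_1$ meeting $\sigma_0$ in a single point of multiplicity $3k$ correspond, after passing to an affine chart and clearing the section at infinity, to deformations of this germ, and $\dfami_1$ is exactly the locus of singular members. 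This step needs the combinatorics of the linear system (the dimension of $\lfami^k_1$ must match the Milnor number / the dimension of the miniversal unfolding of $y^3+x^{3k+2}$, which is $(3-1)(3k+2-1) = 2(3k+1)$, up to the scaling and affine normalizations), and it should parallel the treatment already carried out in \cite{bl} and in \cite{intern,bries} for discriminant knot groups. Hence $\pi_1(\lfami^k_1 - \dfami_1) \cong \pi^K(y^3+x^{3k+2})$, the discriminant knot group.

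Next I would run the long exact homotopy sequence of the $\CC^*$-action. Since the action is free in the orbifold sense and $\CC^* \simeq S^1$, the principal bundle $\lfami^k_1 - \dfami_1 \to (\lfami^k_1 - \dfami_1)/\CC^*$ gives
\[
\pi_2\bigl((\lfami^k_1 - \dfami_1)/\CC^*\bigr) \tto \pi_1(\CC^*) \tto \pi_1(\lfami^k_1 - \dfami_1) \tto \piorb \PP \hfami^{tri}_{3k+1}(6k) \tto 1,
\]
so that the orbifold fundamental group is $\pi^K/\im(\pi_1(\CC^*))$, a quotient by a cyclic central subgroup. It remains to identify this image with a copy of $\ZZ$ sitting centrally, i.e.\ to check (a) that the map $\ZZ = \pi_1(\CC^*) \to \pi^K$ is injective, and (b) that its image is central. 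Centrality is the expected easy part: the generator is represented by a small loop of the scaling action, which commutes with everything up to homotopy because it is realized by the $\CC^*$-action on the whole total space; concretely it is the "loop at infinity'' in the unfolding, whose class is known to be central in discriminant knot groups of quasi-homogeneous germs (this is the content of the results of \cite{intern,bries}).

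The main obstacle is injectivity of $\ZZ \inj \pi^K$, equivalently showing the quotient does not collapse the central $\ZZ$. I would handle this by exhibiting an explicit homomorphism from $\pi^K(y^3+x^{3k+2})$ onto a group in which the image of the $\CC^*$-generator has infinite order — for instance the abelianization, or the homological degree/linking-number map counting intersections with $\dfami_1$, under which the scaling loop maps to $\deg \dfami_1 \neq 0$. Combining this with the presentation of $\pi^K$ as (a quotient related to) an Artin group — which for $y^3+x^{3k+2}$ in the case $k=1$ is the $E_8$ Artin group, matching the abstract of the paper — makes the centrality and the infinite order of the distinguished element transparent, and pins down the central $\ZZ$ precisely. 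Assembling (a) and (b) upgrades the exact sequence above to the claimed short exact sequence
\[
1 \tto \ZZ \tto \pi^K(y^3+x^{3k+2}) \tto \piorb \PP \hfami^{tri}_{3k+1}(6k) \tto 1. \qed
\]
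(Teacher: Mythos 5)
Your overall architecture coincides with the paper's: identify $\pi_1$ of the discriminant complement with the discriminant knot group, then run the homotopy exact sequence of the Borel construction for the residual $\CC^*$-action, proving injectivity of $\ZZ=\pi_1(\CC^*)$ via its image in $H_1$ (linking with the discriminant) and centrality from the fact that the generator is realized by the action of the connected group $\CC^*$. Those last two steps are essentially the paper's argument and are fine.

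The gap is in the first step. You justify $\pi_1(\lfami^k_1-\dfami_1)\cong\pi^K(y^3+x^{3k+2})$ by asserting that the dimension of $\lfami^k_1$ matches the Milnor number $\mu=(3-1)(3k+2-1)=6k+2$ of the germ, "up to scaling and affine normalizations". It does not: the slice $V_1^k$ (equivalently $\lfami^k_1$ minus the hyperplane $s=0$) has the free coefficients $p_0,\dots,p_{2k+1}$ and $q_0,\dots,q_{3k}$, hence dimension $5k+3$, which equals $6k+2$ only for $k=1$. For $k>1$ the space is a proper sub-unfolding of the miniversal unfolding of $y^3+x^{3k+2}$: it is the unfolding over the span of the monomials $x^iy^j$ of weighted degree $3i+(3k+2)j<9k+6$, and it omits the versal directions $x^iy$ with $2k+2\le i\le 3k$. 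Consequently the identification of the fundamental group of its discriminant complement with the discriminant knot group is not a formal consequence of versality plus a dimension count; one needs the non-trivial comparison result that restricting the versal unfolding to this sub-unfolding of low weighted degree does not change $\pi_1$ of the discriminant complement, which is exactly what the paper imports from \cite[Section 3.2]{mir}. Your proposal would therefore go through as written only in the $E_8$ case $k=1$; for general $k$ you must cite or reprove that comparison statement, otherwise the middle term of your exact sequence is not identified.
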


\begin{cor}
\label{third}
The orbifold fundamental group of $\PP \hfami^{tri}_{3k+1}(2k,2k,2k)$ fits
into a short exact sequence 
\[
1 \; \to \; \ZZ^2 \; \to \; 
\pi^K(y^3+x^{3k+3}) \rtimes \pi^K(y^3+x^2)
\; \to \;
\piorb \PP \hfami^{tri}_{3k+1}(2k,2k,2k)
\; \to \; 1
\]
as quotient of a semi-direct product of discriminant knot groups $\pi^K$ 
of singular plane curve germs
$y^3+x^{3k+3}$ and $y^3+x^2$.
\end{cor}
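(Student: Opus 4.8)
The plan is to combine the orbifold identification of Theorem~\ref{orbi} with the homotopy exact sequence attached to the defining group action, and then to feed in the fundamental group of the discriminant complement that occurs. Put $X=\lfami^k_3-\dfami_3-\dfami'$ and $G=\CC\rtimes(\CC^*)^2$, so that $\PP\hfami^{tri}_{3k+1}(2k,2k,2k)\cong[X/G]$ as orbifolds. Since $G$ is connected and acts with finite isotropy, $\piorb[X/G]=\pi_1(EG\times_G X)$, and the Borel fibration $X\to EG\times_G X\to BG$ yields the exact sequence
\[
\pi_1(G)\tto\pi_1(X)\tto\piorb\PP\hfami^{tri}_{3k+1}(2k,2k,2k)\tto\pi_0(G)=1 .
\]
The unipotent factor $\CC$ is contractible and $\pi_1((\CC^*)^2)=\ZZ^2$, so $\pi_1(G)=\ZZ^2$; thus it suffices to prove (i) $\pi_1(X)\cong\pi^K(y^3+x^{3k+3})\rtimes\pi^K(y^3+x^2)$ and (ii) that $\pi_1(G)=\ZZ^2\to\pi_1(X)$ is injective, for then the displayed sequence is exactly the asserted one.

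Step (i) follows the pattern of the maximal-Maroni computation of \cite{bl}. Members of $\lfami^k_3\subset|3\sigma_0|$ are trisections $C$ of the ruling of $\FF_{k+1}$ recorded together with the degree-$3$ divisor cut on $C$ by a distinguished fibre $f_0$; here $\dfami_3$ is the singular locus and $\dfami'$ the locus where $f_0$ fails to be transverse to $C$. Recording the cubic induced on the fixed line $f_0$ gives a map from $X$ onto the complement of the discriminant in a versal unfolding of $y^3+x^2$ --- the transverse model of a simple tangency of $f_0$ with a branch of $C$ --- whose fundamental group is $\pi^K(y^3+x^2)$, and a choice of curve through a prescribed configuration on $f_0$ furnishes a section. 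Over a generic configuration the fibre, with the residual discriminant $\dfami_3$ deleted, is modelled by degenerating $C$ to a curve carrying the worst isolated triple point supported on a fibre of $\FF_{k+1}$, namely $y^3+x^{3k+3}$, so its fundamental group is $\pi^K(y^3+x^{3k+3})$; the resulting split extension, with quotient acting through the monodromy of the singular trigonal configurations as $f_0$ moves, is the semidirect product in the statement. The substantive part here is the Zariski--van Kampen and Lefschetz-pencil analysis of $\lfami^k_3$ that identifies both discriminant complements with the stated unfolding complements, exactly as in \cite{bl}; I expect this, rather than the formal bookkeeping, to be where most of the work sits.

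Step (ii) is the genuinely delicate point: a generic $G$-orbit must be incompressible in $X$. The two circle generators of $\pi_1(G)$ come from the two $\CC^*$-factors --- one an overall rescaling of the linear system, the other a rescaling of $\FF_{k+1}$ compatible with fixing $f_0$ --- and their classes in $\pi_1(X)$ are pinned down by their linking numbers with $\dfami_3$ and $\dfami'$, read off from explicit local normal forms at generic points of these divisors. The outcome should be that the first generator maps to a central element of $\pi_1(X)$ of infinite order (the full monodromy of the whole system, detected by its linking number with $\dfami_3\cup\dfami'$) and the second to an element of infinite order projecting onto a generator of the infinite cyclic centre of the quotient factor $\pi^K(y^3+x^2)$. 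Injectivity of $\ZZ^2\to\pi^K(y^3+x^{3k+3})\rtimes\pi^K(y^3+x^2)$ then follows by first projecting to the quotient factor, then using infiniteness of the central element of $\pi_1(X)$. Verifying that the orbit classes are exactly these central generators --- not proper powers, not trivial --- is the crux; it runs parallel to the treatment of the central $\ZZ$ in Corollary~\ref{first} and, as there, reduces to one explicit local computation at each discriminant component.
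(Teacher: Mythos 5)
Your proposal follows essentially the same route as the paper's proof: the Borel-construction exact sequence for the $\CC\rtimes(\CC^*)^2$-action giving $\ZZ^2\to\pi_1(X)\to\piorb\to1$, the identification of $\pi_1$ of the discriminant complement with $\pi^K(y^3+x^{3k+3})\rtimes\pi^K(y^3+x^2)$ via the (quasi-)fibration obtained by recording the cubic cut on the distinguished fibre at infinity together with an explicit section, and injectivity of $\ZZ^2$ detected by linking with the two divisors. The differences are only in how sub-steps are justified: the paper appeals to \cite{bries} (and the argument of \cite{mir}) for the fibre identification rather than a Zariski--van Kampen analysis, and it proves injectivity purely homologically -- the composite $\ZZ^2\to H_1(U_3^k)\to\ZZ^2$ given by linking numbers is injective -- which bypasses the centrality and infinite-order claims you leave unverified.
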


Another aspect we want to emphasize arrises in the special case $k=1, g=4$
of our results. From the basic observation that all curves of genus $4$ are trigonal
we can sharpen the theorem 
to obtain an alternative proof of \cite[Thm.1]{gia}
(this is not possible for $k>1$ by a simple dimension
count).

\begin{cor}
\label{pezzo}
The projectivization $\PP \hsix$
of the space $\hsix$ of abelian differentials of even spin structure
is isomorphic to a quotient 
\[
\PP \hsix
\quad \cong \quad
(\CC^8 - \dfami_{E_8})/\CC^*
\]
of the unfolding of the simple $E_8$ singularity and
the orbifold fundamental group is isomorphic to a quotient
of the Artin group of type $E_8$ by its centre:
\[
\piorb \PP \hsix
\quad \cong \quad
Ar(E_8)/Centre
\]
\end{cor}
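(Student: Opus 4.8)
The plan is to derive Corollary~\ref{pezzo} from Theorem~\ref{orbi} and Corollary~\ref{first} in the case $k=1$, $g=4$, taking advantage of the classical fact that \emph{every} non-hyperelliptic curve of genus $4$ is trigonal, so that the trigonal hypothesis in Definition~\ref{def_strict}(i) is automatic. First I would observe that for $g=4$ the stratum $\hfami_4(6)$ of abelian differentials with a single zero of order $6$ has only two connected components, distinguished by the parity of the spin structure, and that the hyperelliptic component $\hfami^{hyp}(6)$ carries the odd spin structure; hence the even component $\hsix$ consists \emph{entirely} of differentials on non-hyperelliptic curves. Since on a non-hyperelliptic genus $4$ curve the $g^1_3$ is unique and the canonical divisor $D$ of a point of even-spin $\hfami_4(6)$ is necessarily $6$ times a point (the unique ramification-type divisor of the $g^1_3$), every point of $\hsix$ is strictly trigonal in the sense of Definition~\ref{def_strict}. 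Therefore the inclusion $\PP\hfami^{tri}_4(6)\hookrightarrow\PP\hsix$ is in fact an equality of orbifolds, and Theorem~\ref{orbi} identifies this space with $(\lfami^1_1-\dfami_1)/\CC^*$.

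Next I would make the linear system $\lfami^1_1\subset|3\sigma_0|$ on $\FF_2$ completely explicit. For $k=1$ the relevant Hirzebruch surface is $\FF_{k+1}=\FF_2$, and $\lfami^1_1$ is the subsystem of trisections cut out by the condition that the canonical divisor of the resulting trigonal curve be concentrated at a single point with the correct multiplicity; an Euler-characteristic/dimension count (as flagged in the parenthetical remark after Corollary~\ref{pezzo}) should give $\dim\lfami^1_1=8$, i.e.\ $\lfami^1_1\cong\CC^8$ after a suitable choice of affine chart, and $\dfami_1$ is the discriminant hypersurface of singular trisections. The key geometric input is then the identification of this pair $(\CC^8,\dfami_1)$ with the base and discriminant of a miniversal unfolding of the simple singularity $E_8$: the trigonal curve degenerates, at the relevant point of $\dfami_1$, to the plane curve germ $y^3+x^5$, which is exactly $E_8$, and the linear system $\lfami^1_1$ realises a miniversal deformation of this germ. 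This matches the ``$\pi^K(y^3+x^{3k+2})=\pi^K(y^3+x^5)$'' appearing in Corollary~\ref{first}, and Brieskorn's theorem (\cite{bries}) identifies the discriminant knot group of a simple singularity $X_\ell$ with the Artin group $Ar(X_\ell)$; here $\pi^K(E_8)=Ar(E_8)$. Plugging this into the short exact sequence of Corollary~\ref{first} gives
\[
1\;\to\;\ZZ\;\to\;Ar(E_8)\;\to\;\piorb\PP\hsix\;\to\;1,
\]
and since the centre of $Ar(E_8)$ is infinite cyclic and equals the central $\ZZ$ appearing here (the image of a full twist / generator of the centre of the associated spherical Artin group), the quotient is exactly $Ar(E_8)/\mathrm{Centre}$, proving the group-theoretic half. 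The scheme-theoretic half, $\PP\hsix\cong(\CC^8-\dfami_{E_8})/\CC^*$, is then the statement that the $\CC^*$-action in Theorem~\ref{orbi} on $\lfami^1_1-\dfami_1$ is precisely the natural scaling action on the miniversal $E_8$-unfolding with its discriminant removed.

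I expect the main obstacle to be the \emph{identification step}: showing that the explicit linear system $(\lfami^1_1,\dfami_1)$ on $\FF_2$ really is a miniversal unfolding of $E_8$, with the $\CC^*$-weights matching up. One has to check that the generic member of $\lfami^1_1$ through the ``maximally degenerate'' point deforms the $y^3+x^5$ germ transversally to all equisingularity strata — equivalently that the Kodaira–Spencer map from the tangent space of $\lfami^1_1$ to the Tjurina/Milnor algebra of $E_8$ is surjective — and since $\mu(E_8)=8=\dim\lfami^1_1$ this forces an isomorphism, but verifying surjectivity requires genuinely understanding how trisections of $\FF_2$ vary near the cusp, and making sure no global obstruction (e.g.\ from the $\sigma_0$-section or the Maroni stratification) interferes. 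A secondary technical point is pinning down that the central $\ZZ$ in Corollary~\ref{first} coincides with the full centre of $Ar(E_8)$ rather than a proper subgroup; this follows because $E_8$ is of spherical type with centre $\cong\ZZ$ and the ``maximal central subgroup'' phrasing in the abstract indicates exactly that the kernel is the whole centre, but it should be stated carefully, perhaps by invoking Deligne's / Brieskorn–Saito's description of the centre of an Artin group of spherical type as generated by (a power of) the Garside element. Modulo these two points, everything else is bookkeeping: collating the $k=1$ specialisations of Theorem~\ref{orbi} and Corollary~\ref{first} and invoking the classical genus-$4$ trichotomy of spin structures.
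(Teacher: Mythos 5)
Your overall route coincides with the paper's: specialize Theorem~\ref{orbi} and Corollary~\ref{first} to $k=1$, identify the relevant slice with the miniversal $E_8$ unfolding, and reduce everything to the equality $\hsix=\hfami^{tri}_{4}(6)$. But your argument for that equality --- which is the real content of the corollary and is isolated in the paper as a separate lemma --- has two genuine defects. First, your description of the components of $\hfami_4(6)$ is incorrect: by Kontsevich--Zorich the minimal stratum in genus $4$ has \emph{three} components (hyperelliptic, even, odd), and the hyperelliptic component carries an \emph{even} theta characteristic ($h^0(3p)=2$ at a Weierstrass point $p$), not an odd one. So the inference ``the hyperelliptic component is odd, hence $\hsix$ is non-hyperelliptic'' rests on a false premise; the correct reason is simply that any abelian differential with a single zero of order $6$ on a hyperelliptic curve lies in the component $\hfami^{hyp}_4(6)$, which is disjoint from $\hsix$ --- this is how the paper argues. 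Second, the passage from ``$(C,6p)\in\hsix$'' to ``$3p$ lies in a $g^1_3$'', i.e.\ to strict trigonality, is asserted rather than proved: you appeal to uniqueness of the $g^1_3$, which is false for a general non-hyperelliptic genus-$4$ curve (there are two $g^1_3$'s unless the canonical model lies on the quadric cone), and which in any case does not yield the claim --- on the odd component one also has $D=6p$ yet $3p$ moves in no pencil. The missing argument is the parity one: $3p$ is an effective theta characteristic, so $h^0(3p)\geq 1$, and evenness forces $h^0(3p)=2$, whence $|3p|$ is a $g^1_3$ (and only then does $C$ lie on the quadric cone, making that $g^1_3$ unique). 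Together with the converse inclusion --- that a strictly trigonal pair of type $(6)$ has even parity because its theta characteristic is a $g^1_3$ divisor with $h^0=2$, a direction you do not address --- this is exactly what the paper's lemma supplies, via the cohomological computation $h^0(kL|_C)=k+1$ on $\FF_{k+1}$.

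The remaining ingredients of your plan are sound and in fact easier than you anticipate. No Kodaira--Spencer or transversality verification is needed for the identification with the $E_8$ unfolding: the proof of Theorem~\ref{orbi} produces the explicit affine slice $V^1_1=y^3+x^5+\span\{1,x,x^2,x^3,y,xy,x^2y,x^3y\}$, which \emph{is} a miniversal unfolding of $y^3+x^5$ on the nose, $\CC^*$-equivariantly; this is all the paper uses. Your concern about the kernel being the full centre is legitimate and is settled the way you suggest: the image of the generator of $\pi_1(\CC^*)$ is the central element $(t_1\cdots t_8)^{15}$ of the remark following Corollary~\ref{first}, and since the longest element of the $E_8$ Weyl group is central, the centre of $Ar(E_8)$ is generated by the Garside element, which this central square root of $\Delta^2$ must equal (Artin groups of spherical type being torsion-free); hence the quotient is indeed $Ar(E_8)$ modulo its full centre.
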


This result is very similar to results of \cite{lm}.
Indeed it should be interesting to look at $\PP \hsix$
from the point of view of anti-canonical divisors on a degree one
delPezzo surface.
\medskip

Conspicuously we make no claim in case of type $(4k,2k)$
where obviously $\piorb = \pi_1$.
However, the discriminant complements $\lfami^k_{2} -\dfami$ is not 
induced from a versal unfolding of plane curve germs and
an identification with a discriminant knot of a singular plane curve germs
is not possible.

Let us sketch though an idea how to get around that obstacle:
As finitely presented groups, the groups above should be
studied as instances of \emph{secondary braid groups} ${}^2Br$
associated to positive braid words, as proposed by the author
and investigated together with Baader \cite{bal}.
Indeed, there is the following identification with
secondary braid groups
associated to positive words in the standard generators $\sigma_1,\sigma_2$ 
of the braid group $Br_3$:
\[
\pi^K(y^3+x^{3k+2})
\quad \cong \quad
{}^2Br\big( (\sigma_1\sigma_2)^{3k+2} \big)
\]
\[
\pi^K(y^3+x^{3k+3}) \quad \cong \quad
{}^2Br\big( (\sigma_1\sigma_2)^{3k+3} \big)
\]
\[
\pi^K(y^3) \quad \cong \quad
{}^2Br\big( (\sigma_1\sigma_2)^{2} \big)
\quad \cong \quad
Br_3
\]
These positive words define braids which close to the links in $S^3$
of the respective plane curve singularities.
Moreover it is true that the link at infinity of the smooth curves in $\lfami^k_1,
\lfami^k_{2}$ and $\lfami^k_{3}$
is equal to the closure of the
respective braids 
\[
(\sigma_1\sigma_2)^{3k+2},\quad
\sigma_1(\sigma_2\sigma_1)^{3k+2} =
(\sigma_1\sigma_2)^{3k+2}\sigma_1 \quad\text{and}\quad
(\sigma_1\sigma_2)^{3k+3}
\]
So by interpolation and after some encouraging first investigations we are confident 
to propose the following conjecture:

\begin{conj}
\label{two}
The orbifold fundamental group of $\PP \hfami^{tri}_{3k+1}(4k,2k)$ is given by
an isomorphism
\[
\piorb \PP \hfami^{tri}_{3k+1}(4k,2k)
\quad \cong \quad
{}^2Br\big( \sigma_1(\sigma_2\sigma_1)^{3k+2} \big).
\]
where the secondary braid group has a finite presentation by
\[
\left\langle t_1, \dots, t_{6k+3} \:\Bigg|\:
\begin{array}{ll}
t_i t_j = t_j t_i & \text{if } |i-j|>2 \\
t_i t_j t_i = t_j t_i t_j & \text{if } |i-j|\leq2 \\
t_i t_{i+1}t_{i+2}t_i = t_{i+1}t_{i+2}t_i t_{i+1}
\end{array}
\right\rangle_.
\]
\end{conj}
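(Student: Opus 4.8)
The plan is to carry out, for the family $\lfami^k_2-\dfami_2$, the same kind of braid-monodromy analysis that underlies the already settled types $(6k)$ and $(2k,2k,2k)$, but now intrinsically on the Hirzebruch surface rather than by transport from a versal unfolding of a plane curve germ. By Theorem \ref{orbi} the group in question is literally $\pi_1(\lfami^k_2-\dfami_2)$, so the goal is to compute this fundamental group and recognise it as ${}^2Br(\sigma_1(\sigma_2\sigma_1)^{3k+2})$. First I would fix an explicit model: a generic $C\in\lfami^k_2$ is a smooth trisection of the ruling $\FF_{k+1}\to\PP^1$, and after deleting the distinguished fibre --- the one carrying the simple ramification point that produces the $4k,2k$ splitting of the canonical divisor --- one obtains a simply branched triple cover of $\CC$ whose link at infinity is the closure of $\sigma_1(\sigma_2\sigma_1)^{3k+2}=(\sigma_1\sigma_2)^{3k+2}\sigma_1\in Br_3$, as recorded above. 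The fibre surface of that link is the affine curve itself, with first Betti number $6k+3$, equivalently the number of bands of the positive word once a maximal spanning tree has been removed; these $6k+3$ bands will furnish the generators $t_1,\dots,t_{6k+3}$.

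Second, I would run a Zariski--van Kampen / Lefschetz pencil argument on $\lfami^k_2$. The discriminant $\dfami_2$ is irreducible and is met transversally by a generic pencil in $N=\deg\dfami_2$ points, all corresponding to one-nodal trigonal curves, so $\pi_1(\lfami^k_2-\dfami_2)$ is generated by the meridians of $\dfami_2$ carried by a generic line, subject to the relations imposed by the braid monodromy of that line as its $N$ nodal points are encircled. This is exactly the step where the argument departs from \cite{intern,bries}: there the braid-monodromy factorisation came packaged with a Morsification of $y^3+x^n$, whereas here the factorisation of the full twist attached to $\sigma_1(\sigma_2\sigma_1)^{3k+2}$ has to be produced from the geometry --- for instance by degenerating a generic $C$ to a maximally nodal ``string of conics'' trigonal curve on $\FF_{k+1}$, reading the vanishing cycles off that degeneration, and checking that it neither drops nor creates any.

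Third, with the resulting Hurwitz system in hand I would identify the presentation it yields with the secondary braid group ${}^2Br(\sigma_1(\sigma_2\sigma_1)^{3k+2})$ of \cite{bal}. By construction that group has one generator per band of the word and one relation for each Hurwitz interaction of near-neighbour bands, and for the linear ``zig-zag'' layout of $\sigma_1(\sigma_2\sigma_1)^{3k+2}$ these interactions reproduce precisely the three families in the statement: the distant commutations for $|i-j|>2$, the Artin braid relations for $|i-j|\le2$ from near-neighbour bands, and the quadruple relations $t_it_{i+1}t_{i+2}t_i=t_{i+1}t_{i+2}t_it_{i+1}$ recording the codimension-two strata of $\dfami_2$ (two nodes colliding into a tacnode, and a node sweeping across the distinguished fibre). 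That these relations already present the group completely --- that no hidden relation survives the collapse of the pencil --- is, as in \cite{bl} and \cite{bal}, a finite combinatorial verification.

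The main obstacle is the second step. Unlike the germ cases there is no off-the-shelf braid-monodromy factorisation to quote, since $\lfami^k_2-\dfami_2$ is not the discriminant complement of any versal unfolding of a plane curve germ; one must prove directly, from the birational geometry of trigonal curves on $\FF_{k+1}$, that the factorisation of the braid at infinity into $6k+3$ bands is Hurwitz-equivalent to the standard factorisation underlying ${}^2Br(\sigma_1(\sigma_2\sigma_1)^{3k+2})$. Concretely this amounts to exhibiting a sufficiently explicit total degeneration of a generic member of $\lfami^k_2$ from which the full Hurwitz system --- including the location of the extra band contributed by the marked ramification point --- can be read off unambiguously. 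Once that is secured, the descent to the advertised finite presentation is routine.
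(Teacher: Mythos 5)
Note first that the statement you are proving is stated in the paper as Conjecture \ref{two} and is explicitly deferred to a future paper; there is no proof in the text to compare against, only the heuristic evidence (the identifications $\pi^K(y^3+x^{3k+2})\cong{}^2Br((\sigma_1\sigma_2)^{3k+2})$, $\pi^K(y^3+x^{3k+3})\cong{}^2Br((\sigma_1\sigma_2)^{3k+3})$ and the computation of the links at infinity) that motivates the interpolation. Your proposal follows exactly that heuristic and is a reasonable programme, and your bookkeeping is consistent: by Theorem \ref{orbi} the group is indeed $\pi_1(\lfami^k_2-\dfami_2)$ with no group quotient, and the affine curve (genus $3k+1$, two punctures on $L_0$) has first Betti number $6k+3$, matching the number of bands of $\sigma_1(\sigma_2\sigma_1)^{3k+2}$ and the number of proposed generators.

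However, as a proof it has a genuine gap, and it is precisely the one you flag yourself in your second step and then wave away at the end. In the settled cases $(6k)$ and $(2k,2k,2k)$ the completeness of the presentation is not obtained by ``a finite combinatorial verification'' of a Hurwitz system read off a degeneration: it comes from identifying the relevant slice of $V^k$ with (a piece of) a \emph{versal} unfolding of a plane curve germ, so that the known structure of discriminant knot groups (\cite{intern}, \cite{bries}, and the transfer argument of \cite{mir}) guarantees that the Zariski--van Kampen relations exhaust all relations. The paper points out that exactly this is unavailable for type $(4k,2k)$: $\lfami^k_2-\dfami_2$ is not induced from a versal unfolding, so there is no off-the-shelf theorem certifying that the meridian generators and the relations you collect from nodal degenerations, tacnodal/binodal collisions and the interaction with the distinguished fibre generate and present $\pi_1$ with no hidden relations. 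Your plan also leaves unproved several of its working hypotheses (irreducibility of $\dfami_2$, the list of codimension-two strata of $\dfami_2$ contributing relations, and the claim that a chosen total degeneration yields a braid-monodromy factorisation Hurwitz-equivalent to the standard one defining ${}^2Br(\sigma_1(\sigma_2\sigma_1)^{3k+2})$ in \cite{bal}). Establishing that Hurwitz equivalence and the absence of further relations is the actual content of the conjecture, so the proposal should be regarded as a strategy outline rather than a proof.
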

\medskip

While this has to be deferred to a future paper, we are going to proof our
stated results pursuing the following steps.
First we are going to review and use canonical geometry of trigonal curves. 
Then we reduce the canonical global quotient construction of our loci to
a quotient constructions on a vector space of polynomials.

The two subsequent section provide a set of transformations and employ
them to accomplish the proof of the theorem.
In the final section we give the proofs of the corollaries.  

%%%%%%%%%%%%%%%%%%%%%%%%%%%%%%%%%%
\section{canonical geometry}

There is a well-known approach to the study of non-hyperelliptic trigonal curves 
and their moduli. We refer to \cite{sv} which provides all the information necessary
for the following set-up.

Let $C$ be a non-hyperelliptic curve of genus $g\geq 4$, then it is canonically
embedded into projective space by its canonical linear system.
\[
\phi_{|\omega_C|}: C \to \PP^{g-1}:= \PP H^0(C, \omega_C).
\]
Every canonical divisor of $C$ is thus identified with a hyperplane section 
of the image $C_{can}:=\phi(C)$.

If moreover $C$ is trigonal, i.e.\ $C$ has a base point free $g_3^{\,1}$, then the
canonical image of $C$ is contained in a rational scroll $S_C$ projectively equivalent
to some
\[
S_{mn} \quad = \quad \Big\{ (z_0: \dots : z_{g-1}) \mid
\rk \Big(
\begin{array}{cccccc}
z_0 & \ldots & z_{n-1} & z_{n+1} & \ldots & z_{n+m} \\
z_1 & \ldots & z_{n} & z_{n+2} & \ldots & z_{n+m+1}
\end{array}
\Big) <2 \Big\}
\]
where $g=m+n+2$ and $m\leq n$.
The scroll $S_C$ for the curve $C$ is cut out by the quadrics containing
$C_{can}$ such that the divisors of the $g_3^{\,1}$ on $C$ and the 
lines on $S_C$ correspond bijectively. 

The $g_3^{\,1}$ on $C$ is unique except for
the case of $g=4$ and $n=m=1$ when there are two 
$g_3^{\,1}$ on $C$ and two corresponding rulings of $S_C$ by lines but that
case will not play a role in this article.
\medskip

The difference $e:=n-m$ is called the \emph{Maroni invariant} and known to take 
all integral values subject to
\[
0\leq n-m \leq \lfloor \frac{g+2}3 \rfloor, \quad n-m \equiv_2 g.
\]
The Maroni invariant determines the smooth model of $S_{mn}$ to be
the rational ruled Segre-Hirzebruch surface
\[
\FF_{n-m} \quad := \quad \PP\; \ofami(n) \oplus \ofami(m)
\]
The zero section of $\ofami(n)$ provides a moveable curve $\sigma$
on $\FF_{n-m}$,
the zero section of $\ofami(m)$ gives a curve $E$, which has self-intersection
number $-e\leq0$ and which is rigid for $e>0$. Together with the class of a fibre 
$L$ of the ruling, either section class generates the Picard group of $\FF_e$.  

$S_{nm}$ is then the image for a map associated with the complete
linear system
\[
\mid \sigma + m L \mid
\quad = \quad
\mid E + n L \mid 
\]
which defines an embedding except for the case $g=4, e=2$ when $m=0$ and
the exceptional section $E$ of $\FF_2$ is contracted. In that case
$S_{2,0}$ is the quadric cone in $\PP^3$. 
The image of $E$ is is any case given by a parametrization
\[
E: \quad 
\left\{
(0: \ldots : 0 : a^m : a^{m-1}b : \ldots : ab^{m-1} : b^m) \mid
(a:b) \in \PP^1
\right\},
\]
a twisted rational curve for $m>0$ and the point $(0: \ldots : 0 : 1)$ if $m=0$.

We are now sufficiently prepared to prove the following result,
that being part of a strictly trigonal pair, see Def.\ref{def_strict}.1, puts
additional conditions on the trigonal curve. 

\begin{prop}
Suppose $(C,D)$ is a strictly trigonal pair in $\PP\hfami_g$, then
\begin{enumerate}
\item
there is $k>0$ such that $g=3k+1$,
\item
$S_C$ is projectively equivalent to $S_{2k,k-1}$
( thus $C$ has maximal Maroni invariant).
\item
$D$ is cut out by a hyperplane $H$ with $H$ $\cap$ $S_{2k,k-1}$ 
projectively equivalent to
\[
\left\{
\begin{array}{cl}
2kL & \text{if  } k=1\\
2kL+E & \text{if  } k>1
\end{array}
\right.
\]
as a divisor, where $L$ is a line of the scroll.
\end{enumerate}
\end{prop}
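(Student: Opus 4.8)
The plan is to establish the three assertions in turn: (i) is a degree count, (ii) drops out from computing one pushforward sheaf in two ways, and (iii) is an explicit residuation argument on the canonical scroll. For (i): since $D$ is the zero divisor of a holomorphic $1$-form it is a canonical divisor, i.e.\ $\ofami_C(D)\cong\omega_C$, so $\deg D = 2g-2$; on the other hand $D = q\,L_0$ with $L_0$ an effective degree-$3$ divisor in the $g_3^{\,1}$ and $q\ge 1$ an integer. Hence $3q = 2g-2$, and since $\gcd(2,3)=1$ this forces $3\mid g-1$, say $g = 3k+1$; then $q = 2k$ and $k\ge 1$ (no such $D$ exists for $g\le 3$). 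Letting $f\colon C\to\PP^1$ be the degree-$3$ morphism defined by the base-point-free $g_3^{\,1}$, the divisor $L_0$ is a fibre of $f$, so $\ofami_C(D)\cong\omega_C$ gives the reformulation $\omega_C\cong\ofami_C(2k\,L_0)\cong f^*\ofami_{\PP^1}(2k)$, which I use throughout.

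For (ii), I would compute the pushforward $f_*\omega_C$ in two ways. Write $f_*\ofami_C\cong\ofami_{\PP^1}\oplus\ofami_{\PP^1}(-a)\oplus\ofami_{\PP^1}(-b)$ with $1\le a\le b$ (the trivial summand being isolated since $C$ is connected) and $a+b = g+2 = 3k+3$; the Maroni invariant is then $e = b-a$, and $S_C$ is the canonical scroll with smooth model $\FF_e$ (see \cite{sv}). Relative duality over $\PP^1$ gives $f_*\omega_C\cong\ofami_{\PP^1}(-2)\oplus\ofami_{\PP^1}(a-2)\oplus\ofami_{\PP^1}(b-2)$, whereas the reformulation from (i) together with the projection formula gives $f_*\omega_C\cong\ofami_{\PP^1}(2k)\oplus\ofami_{\PP^1}(2k-a)\oplus\ofami_{\PP^1}(2k-b)$. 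A vector bundle on $\PP^1$ has a unique splitting type, so these two decompositions have the same multiset of twists; comparing the largest twist on each side (note $a,b>0$) yields $b-2 = 2k$, hence $b = 2k+2$, $a = k+1$, and $e = b-a = k+1$. Therefore $S_C$ is projectively equivalent to $S_{2k,k-1}$ with smooth model $\FF_{k+1}$, and $e = k+1 = \lfloor\tfrac{g+2}{3}\rfloor$ is the maximal Maroni invariant.

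For (iii), knowing $e = k+1$, the hyperplane class of $\PP^{g-1}$ restricts on $\FF_{k+1}$ to $E + nL = E + 2kL$, so the hyperplane $H$ cutting out $D$ corresponds to a member $\Gamma\in|E+2kL|$ with $\Gamma\cap C = D$. Write $D = 2k\,L_0$ with $L_0 = L'|_C$ for a fibre $L'$ of the ruling. The key claim is that $L'\subseteq\Gamma$: otherwise $\Gamma\cdot L' = 1$, so $\Gamma\cap L'$ is a single reduced point $Q$, which must then be the only point of $\operatorname{Supp}(\Gamma\cap C)$, forcing $L_0 = 3Q$; but then $C$ has contact of order $3$ with the line $L'$ at $Q$ while $\Gamma$ crosses $L'$ transversally there, so $\Gamma\cap C$ has multiplicity $1$ at $Q$, contradicting that $\Gamma\cdot C = 6k\ge 6$ is concentrated at $Q$. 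Writing $\Gamma = L' + \Gamma_1$ with $\Gamma_1\in|E+(2k-1)L|$ and $\Gamma_1\cap C = (2k-1)L_0$, the same argument applies to $\Gamma_1$ — and at the $i$-th step to $\Gamma_i\in|E+(2k-i)L|$ with $\Gamma_i\cap C = (2k-i)L_0$, as long as $2k-i\ge 1$ — so iterating $2k$ times gives $\Gamma = 2k\,L' + R$ with $R\in|E|$; since $E^2 = -(k+1)\le -2$, the system $|E|$ on $\FF_{k+1}$ is the single point $\{E\}$, whence $H\cap S_C = 2k\,L' + E$ on $\FF_{k+1}$. For $k>1$ the linear system $|E+2kL|$ embeds $\FF_{k+1}$ as $S_{2k,k-1}$, giving $H\cap S_{2k,k-1} = 2kL+E$; for $k=1$ one has $m=0$, the section $E$ of $\FF_2$ is contracted to the vertex of the quadric cone $S_{2,0}$, and on $S_{2,0}$ the hyperplane section is the image $2kL$ of $2k\,L'+E$.

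The main obstacle, I expect, is (ii): everything hinges on the reformulation $\omega_C\cong f^*\ofami_{\PP^1}(2k)$ and on the (standard) fact that the Maroni invariant reads off the splitting type of $f_*\ofami_C$, after which the comparison of the two pushforwards forces $e$ to its maximal value with essentially no case analysis. Part (iii) is then largely bookkeeping, the one delicate point being the local intersection-multiplicity computation that rules out a totally ramified fibre sitting transversally in $\Gamma$; part (i) is immediate.
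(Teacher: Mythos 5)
Your proof is correct, but it follows a genuinely different route from the paper's, so it is worth comparing. Part (i) is the same degree count. For (ii)--(iii) the paper works on the a priori unknown scroll $\FF_e$: it decomposes the hyperplane divisor as $H_S=\sigma_H+aL$ with $\sigma_H$ a section, shows by a local contact argument that $\sigma_H$ must be disjoint from $C$, and then intersection numbers ($\sigma_H\cdot C=0$ together with $C\cdot E\geq 0$) force $\sigma_H=E$ and $C\cdot E=0$, which gives the maximal Maroni invariant and the shape $2kL+E$ of the hyperplane section in one stroke. You instead decouple the two statements: you obtain (ii) purely sheaf-theoretically, comparing $f_*\omega_C\cong\ofami(-2)\oplus\ofami(a-2)\oplus\ofami(b-2)$ (relative duality) with $f_*\omega_C\cong\ofami(2k)\otimes f_*\ofami_C$ (projection formula applied to $\omega_C\cong f^*\ofami_{\PP^1}(2k)$), and uniqueness of the splitting type forces $b=2k+2$, $a=k+1$, $e=k+1$; then (iii) becomes an iterated residuation on the now-known $\FF_{k+1}$, peeling off $2k$ fibres and ending with $|E|=\{E\}$. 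What your route buys: the maximality of the Maroni invariant appears as a formal consequence of $\omega_C\cong f^*\ofami_{\PP^1}(2k)$, with no smoothness or tangency considerations, and the logic of (ii) and (iii) is cleanly separated; the cost is that you invoke the standard (but not stated in the paper) identification of the scroll type $S_{mn}$, $m=a-2\leq n=b-2$, with the splitting type of $f_*\ofami_C$, for which \cite{sv} or standard scroll theory suffices. Your step (iii) is in substance the same local argument as the paper's disjointness step -- a totally ramified fibre met transversally would contribute local intersection multiplicity $1$ where the divisor $\Gamma_i\cap C=(2k-i)L_0$ demands at least $3$ -- applied $2k$ times rather than once. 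Two small points you leave implicit and should record: $C$ is not a component of any $\Gamma_i$ (clear, since $\Gamma_i\cdot L=1<3=C\cdot L$ and $L$ is nef), so the residuation of intersection divisors on $C$ is legitimate; and $a,b\geq 1$ in the splitting of $f_*\ofami_C$ because $h^0(f_*\ofami_C)=h^0(\ofami_C)=1$, which is what makes $2k$ the largest twist on the projection-formula side.
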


\begin{proof}
Since $D$ is an integer multiple of a divisor $D' \in g_3^1(C)$, this is 
also true for their degrees, so $3$ divides $2g-2$ and thus divides $g-1$. Hence
$g=3k+1$ and $D=2k D'$. Since there are no strictly trigonal curves for $g=1$,
$k>0$ and part $i)$ is proved.

A canonical image of a trigonal $C$ lies on a rational scroll $S_{nm}$
with $e= n-m \leq k+1$ of the same parity as $g$ and $k+1$.
Via $\FF_e\to S_{nm}$ we consider $C$ as a curve on $\FF_e$,
its $g_3^1$ as the restriction of the linear system $\mid L\mid$ and
any hyperplane section of $S_{nm}$ as an effective divisor 
$H_S$ on $\FF_e$. Then
\[
C \,\in\; \mid 3\sigma_0+(2m-n+2) L \mid, \quad
H_S \,\in\; \mid \sigma_0+ mL \mid \;=\; \mid E+ nL \mid .
\]
Accordingly the hyperplane section $D$ of $C$ is given as $H_S\cap C$ on $\FF_e$
and the defining property of strictly trigonal
pairs poses a strong condition on $H_S$:
\begin{eqnarray}
\label{schnitt}
H_S \cap C \quad = \quad D &=& 2kD' \quad = \quad 2k L \cap C \quad \text{as divisors
of degree }2g-2=6k
\end{eqnarray}
\begin{eqnarray}
H_S \cap C \quad = \quad D &=& D' \quad = \quad L' \cap C \quad \text{as sets}
\end{eqnarray}
for a single fibre $L'$ of the ruling.
The class of $H_S$ forces this divisor to have the following irreducible decomposition
\[
H_S = \sigma_H + a L
\]
with some smooth section $\sigma_H$ to the ruling of $\FF_e$ and $a\geq0$. Indeed, $\sigma_H$
must be disjoint from $C$ which implies $a=n = 2k$ and thus the remaining
claims. 
Otherwise $\sigma_H \cap C=\sigma_H \cap L$ were a single point 
contributing with multiplicity at least $3$ to both $H_S\cap C$
and $L\cap C$ which is impossible since all three divisors are smooth
and $\sigma_H$ intersects $L$ transversally.
\qed
\end{proof}

Let us note, that on $S_{2k,k-1}\subset \PP^{g-1}$ the hyperplane $H_0$ given by $z_0=0$
cuts out the effective divisor
\[
H_S = H_0\cap S_{2k,k-1} = 
E \cup 2k L_0
\]
where $L_0$ is a line of the ruling and there is a map $\CC^2\to S_{2k,k-1}$
which is an isomorphism onto the complement of $E \cup L_0$:
\[
\begin{array}{rcl}
\CC^2 & \tto & S_{2k,k-1} \\
x,y & \mapsto & (1:x:\dots:x^n:x:xy:\dots:x^my)
\end{array}
\]
It induces an isomorphism
\[
\mid 3\sigma_0+(2m-n+2) L \mid
\quad \cong \quad
\PP V^k
\]
where $V^k\subset \CC[x,y]$ is the vector space of polynomials
$f= s y^3 + r(x) y^2 + p(x) y + q(x)$ spanned by monomials
\[
x^i y^j \quad \text{of weighted degree} \quad
 i + (k+1) j \; \leq \; 3k + 3
\]
that is, $s,r(x),p(x),q(x)\in\CC[x]$ are polynomials of degrees at most
$0,k+1,2k+2,3k+3$ respectively.
\medskip

In the end we want to discard the $f\in V^k$ in a discriminant divisor 
corresponding to singular curves, hence we define
\medskip

\begin{defi}
An element $f\in V^k$ is called \emph{regular} if it has the following equivalent
properties:
\begin{enumerate}
\item
$C_f\in \mid 3\sigma_0+(2m-n+2) L \mid$ corresponding to $f=0$ is a smooth curve on $\FF_{k+1}$, 
\item
$s\neq 0$, $f=0$ defines a smooth curve in $\CC^2$ and and $C_f$ has no singularity on $L_0$.
\end{enumerate}
\end{defi}

%%%%%%%%%%%%%%%%%%%%%%%%%%%%%%%%%%
\section{orbifold structure}

Let us now turn to the moduli spaces of strictly trigonal pairs.
The orbifold structure for the open part $\PP \hfami_g^{non-hyp}$ of $\PP \hfami_g$ corresponding
to non-hyperelliptic curves is given by the identification with a global quotient
\[
\PP \hfami_g^{non-hyp} \quad \cong \quad
\left\{
(C,H) \subset \PP^{g-1} \;\left|\; 
\begin{array}{cl}
C  \text{ a smooth canonical curve} \\
H  \text{ a hypersurface}
\end{array} 
\right.\right\}
\Big/ \raisebox{-2mm}{$\PP GL_{g}$}
\]
Then $\PP \hfami_g^{tri}$ is non-empty only if $g=3k+1$ and it is 
closed in $\PP \hfami_g^{non-hyp}$
corresponding to pairs $(C,H)$ such that \\
$C$ meets the following equivalent conditions:
\begin{enumerate}
\item
it is trigonal of Maroni invariant $k+1$.
\item
it is trigonal with $S_C$ projectively equivalent to $S_{2k,k-1}$.
\end{enumerate}
$H$ meets the following equivalent conditions:
\begin{enumerate}
\item[iii)]
it intersects $C$ in a single divisor of its
$g_3^{\,1}$.
\item[iv)]
it intersects the ruled surface $S_C$ in its section of negativ
self-intersection and a single divisor of its ruling.
\end{enumerate}

The aim of the next steps is to successively reduced the dimension
of the spaces involved in the global quotient.
For the moment, we consider the whole strictly trigonal locus at once:
\[
\PP \hfami_g^{tri} \quad = \quad 
\PP \hfami_g^{tri}(6k) \,\stackrel{\cdot}{\cup}\,
\PP \hfami_g^{tri}(4k,2k) \,\stackrel{\cdot}{\cup}\,
\PP \hfami_g^{tri}(2k,2k,2k).
\]

\begin{prop}
The moduli space $\PP \hfami_g^{tri}$, $g=3k+1$, with its orbifold structure can be 
represented as a global quotient
\[
\left\{
C \mid C \text{ canonical curve on } S_{2k,k-1}
\right\}
/ Stab_{\PP GL}(S_{2k,k-1},H_0)
\]
\end{prop}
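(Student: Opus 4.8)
The plan is to perform two successive normalisations inside the global quotient presentation of $\PP\hfami_g^{non-hyp}$ recalled above, each step trading the acting group for the stabiliser of the datum just fixed. I will use the following standard facts about a rational normal scroll $S=S_{nm}\subset\PP^{g-1}$ (see \cite{sv}): it is nondegenerate, so a hyperplane is determined by its trace on $S$; for $e=n-m>0$ the group $Stab_{\PP GL}(S)$ of projective transformations preserving $S$ is isomorphic to the image of $\Aut(\FF_e)$ acting on the embedding system $\mid\sigma_0+mL\mid$, because a projective transformation fixing $S$ pointwise fixes all of $\PP^{g-1}$; and $\Aut(\FF_e)$ acts transitively on the fibres of the ruling, preserving the negative section $E$. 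For $k=1$ the scroll is the quadric cone $S_{2,0}$, with $\FF_2$ and its contraction replacing $\FF_e$; only cosmetic changes are needed.

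\emph{Step 1: fix the scroll.} By the previous Proposition, $\PP\hfami_g^{tri}$ is non-empty only for $g=3k+1$ and is represented by pairs $(C,H)$ with $C$ a smooth canonical curve whose scroll $S_C$ is projectively equivalent to $S_{2k,k-1}$ (here $n=2k$, $m=k-1$, Maroni invariant $k+1$) and with $H$ cutting out $E+2kL'$ on $S_C$ for some ruling line $L'$. Since $\PP GL_g$ is transitive on the scrolls projectively equivalent to $S_{2k,k-1}$, every $\PP GL_g$-orbit of such a pair meets the locus $Y_1=\{(C,H):S_C=S_{2k,k-1}\}$, and two pairs of $Y_1$ are $\PP GL_g$-equivalent iff they are $Stab_{\PP GL}(S_{2k,k-1})$-equivalent; moreover the stabiliser of a pair in $Y_1$ is unchanged. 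Hence, as orbifolds,
\[
\PP\hfami_g^{tri}\;\cong\;\big\{(C,H)\mid C\text{ smooth canonical on }S_{2k,k-1},\ H\cap S_{2k,k-1}=E+2kL'\big\}\big/Stab_{\PP GL}(S_{2k,k-1}).
\]

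\emph{Step 2: fix the hyperplane.} The residual group acts transitively on the ruling lines of $S_{2k,k-1}$ (transitivity of $\Aut(\FF_{k+1})$ on fibres), so within each orbit we may take $L'=L_0$; then $H\cap S_{2k,k-1}=E+2kL_0=H_0\cap S_{2k,k-1}$, and nondegeneracy of the scroll forces $H=H_0$. Applying the same slice reasoning once more, now inside $Stab_{\PP GL}(S_{2k,k-1})$ with the datum $H=H_0$, gives
\[
\PP\hfami_g^{tri}\;\cong\;\big\{C\mid C\text{ smooth canonical curve on }S_{2k,k-1}\big\}\big/Stab_{\PP GL}(S_{2k,k-1},H_0),
\]
provided conditions (iii)/(iv) on $C$ relative to $H_0$ impose nothing extra. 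They do not: a smooth canonical $C$ on $S_{2k,k-1}$ lies in $\mid3\sigma_0\mid$ by the previous Proposition; smoothness forbids $E$ from being a component of $C$; and $C\cdot E=3(\sigma_0\cdot E)=0$ then forces $C\cap E=\emptyset$, so $H_0\cap C=(E+2kL_0)\cap C=2k(L_0\cap C)$ is $2k$ times a divisor of the $g_3^1$, exactly the strictly trigonal condition.

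The routine parts are the two transitivity statements. The point to handle with care is that each normalisation is an equivalence of \emph{orbifolds}, not merely a bijection of coarse spaces: one invokes the standard fact that if a group $G$ acts on $X$, and $Y\subset X$ is a locally closed subspace with $H=\{g\in G: gY=Y\}$ satisfying $Stab_G(y)=Stab_H(y)$ for all $y\in Y$ and such that $G\times_H Y\to X$ is an isomorphism onto $G\cdot Y$, then $[G\cdot Y/G]\simeq[Y/H]$; here the hypotheses are immediate, since $g$ preserves $Y_1$ iff $gS_{2k,k-1}=S_{2k,k-1}$, and likewise for the sub-locus $H=H_0$. The one genuinely substantive input is the classical description of $Stab_{\PP GL}(S_{2k,k-1})$ as (the image of) $\Aut(\FF_{k+1})$ together with its transitive action on the rulings; this underpins both the normalisation of the hyperplane in Step 2 and the identification of the group appearing in the final quotient, and should be quoted from \cite{sv}.
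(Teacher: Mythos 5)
Your proof is correct and takes essentially the same route as the paper: normalize $(S_C,H)$ to $(S_{2k,k-1},H_0)$ via transitivity of $\PP GL_{3k+1}$ on such pairs and pass to the slice with its stabilizer, i.e.\ the Morita-type equivalence the paper invokes through the equivariant map $(C,H)\mapsto (S_C,H)$ (you merely split the one transitivity statement into two normalisation steps). Your additional verification that $H_0$ automatically cuts out a strictly trigonal divisor on any smooth canonical curve on $S_{2k,k-1}$ (via $C\cdot E=0$) is a detail the paper leaves implicit.
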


\begin{proof}
The group of projective equivalences $\PP GL_{3k+1}$ is transitive on all pairs 
$S,H$ in $\PP^{3k}$
such that $S$ is projectively equivalent to $S_{2k,k-1}$ and
such that  $H$ has property $iv)$ above w.r.t.\ $S$.

In fact, the map $(C,H)\mapsto S_C,H$ is $\PP GL_{3k+1}$-equivariant
and induces an isomorphism in the orbifold sense of the two quotients.
\qed
\end{proof}

Here we see the first instance of how to pass from a quotient description 
by the action of a group $G$ on a space to that of a smaller group on a smaller
space:
\begin{quote}
Take the fibre of a $G$-equivariant map to a $G$-homogenous space and
the stabilizer subgroup.
\end{quote}
This kind of passing between quotient description is an elementary kind of
Morita equivalence.

\begin{prop}
The moduli space $\PP \hfami_g^{tri}$, $g=3k+1$, with its orbifold structure can be 
represented as a global quotient
\[
\left\{
f\in V^k \mid f=0 \text{ non-singular on } \FF_{k+1}
\right\}
/ G\times \CC^*
\]
where $G$ is the group of transformations induced by 
\[
x,y \quad \mapsto \quad
a x + a_0, b y + b_{k+1}x^{k+1} + \dots + b_0
\]
with $a,b\in \CC^*$, $a_0,b_0,\dots,b_{k+1} \in \CC$,
and the factor $\CC^*$ acts by scalar multiplication on $V^k$.
\end{prop}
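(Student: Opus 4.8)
The plan is to continue the chain of Morita-type reductions begun in the previous two propositions. The previous step replaced $\PP\hfami_g^{tri}$ by the quotient of $\{C \text{ canonical on } S_{2k,k-1}\}$ by $G' := Stab_{\PP GL}(S_{2k,k-1},H_0)$, so it suffices to (a) identify the locus of admissible curves $C$ with the locus of regular $f\in V^k$, and (b) identify $G'$ with $G\times\CC^*$. For (a), I would invoke the isomorphism $|3\sigma_0+(2m-n+2)L| \cong \PP V^k$ established just before the definition of regularity, together with the map $\CC^2\to S_{2k,k-1}$ that is an isomorphism onto the complement of $E\cup L_0$. A canonical curve $C$ on $S_{2k,k-1}$ corresponds to a section of $|3\sigma_0+(2m-n+2)L|$, hence to a line $\CC^*\!\cdot f$ in $V^k$; to get an honest point of $V^k$ rather than a projective class one introduces the scaling factor $\CC^*$, which is exactly the claimed extra factor acting by scalar multiplication. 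Smoothness of $C$ on $\FF_{k+1}$ is precisely condition (i) of the definition of regular, so the admissible-curve locus matches $\{f\in V^k \mid f=0 \text{ non-singular on }\FF_{k+1}\}$.

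For (b), the substantive point is the computation of $G' = Stab_{\PP GL_{3k+1}}(S_{2k,k-1},H_0)$ acting on $S_{2k,k-1}\setminus(E\cup L_0)\cong\CC^2$ with coordinates $x,y$. A projective transformation preserving the scroll $S_{2k,k-1}$, its negative section $E$, and the hyperplane $H_0$ (equivalently the line $L_0$, since $H_0\cap S_{2k,k-1}=E\cup 2kL_0$) must preserve the ruling and fix the distinguished fibre $L_0$ setwise. On the base $\PP^1$ of the ruling it therefore acts by an affine transformation $x\mapsto ax+a_0$, $a\in\CC^*$. On the fibres, after accounting for the splitting $\ofami(n)\oplus\ofami(m)$ and the freedom to add a section-valued multiple coming from $H^0(\PP^1,\ofami(n-m))=H^0(\PP^1,\ofami(k+1))$, the transformation acts by $y\mapsto by + b_{k+1}x^{k+1}+\dots+b_0$ with $b\in\CC^*$, $b_i\in\CC$. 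This is exactly the group $G$ described in the statement. One must check there are no further elements: any element of $\PP GL_{3k+1}$ fixing the scroll induces, via the identification of the scroll with $\FF_{k+1}$ and of the linear system $|\sigma_0+mL|$ with the hyperplane bundle, an automorphism of the pair $(\FF_{k+1},$ fibre$)$ covering the hyperplane class, and such automorphisms are precisely the affine group just described; conversely each such automorphism lifts to $\PP GL_{3k+1}$, uniquely up to the scalars already absorbed in $H_0$'s stabiliser. Putting (a) and (b) together and keeping track of the extra scaling $\CC^*$ needed to lift from $\PP V^k$ to $V^k$ yields the global quotient by $G\times\CC^*$.

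The main obstacle is the bookkeeping in (b): one has to verify both that every element of $Stab_{\PP GL}(S_{2k,k-1},H_0)$ really does descend to one of the listed $(x,y)$-transformations (no ``hidden'' projectivities that act trivially on the $\CC^2$ chart but nontrivially on $E$ — but such a projectivity would have to fix $E$ pointwise and fix the complement pointwise, hence be the identity in $\PP GL$), and that every listed transformation genuinely extends from $\CC^2\cong S_{2k,k-1}\setminus(E\cup L_0)$ to a projective automorphism of $\PP^{3k}$ preserving $S_{2k,k-1}$ and $H_0$. The latter is a direct check on the parametrisation $x,y\mapsto(1:x:\dots:x^n:x:xy:\dots:x^my)$: substituting $x\mapsto ax+a_0$ and $y\mapsto by+\sum b_i x^i$ and re-expanding in the monomial basis $1,x,\dots,x^n,x,xy,\dots,x^my$ shows the induced map on homogeneous coordinates is linear, so extends to $\PP GL_{3k+1}$, and it visibly fixes $z_0$ up to scale, hence preserves $H_0=\{z_0=0\}$. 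Finally, one notes the action of $G\times\CC^*$ on $V^k$ indeed preserves the regular locus, since $G$ preserves the scroll and $L_0$ and hence smoothness on $\FF_{k+1}$, while the $\CC^*$ scaling does not change the zero locus at all.
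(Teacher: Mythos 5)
Your proposal is correct, and its skeleton coincides with the paper's: both reduce, via the preceding proposition, to identifying $\ON{Stab}_{\pgl}(S_{2k,k-1},H_0)$ acting on the chart $\CC^2\cong S_{2k,k-1}\setminus(E\cup L_0)$ with the affine group $G$, and both account for the extra $\CC^*$ as the scaling needed to pass from curves (points of $\PP V^k$) to polynomials $f\in V^k$. The difference is in how the stabilizer is pinned down: the paper quotes St\"ohr--Viana, namely their group $\tilde G$ of birational transformations of the chart and \cite[Prop.~1.2]{sv} identifying projective equivalence classes of trigonal canonical curves of Maroni invariant $k+1$ with $\tilde G$-orbits of smooth members of $V^k$, and then observes that the stabilizer of $(S_{2k,k-1},H_0)$ is the biregular subgroup $a'=0$ (normalized to $a'_0=1$), which is exactly $G$. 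You instead verify the identification directly: a projectivity preserving the scroll and $H_0$ preserves the (unique) ruling, the negative section and $L_0$, hence acts affinely on the base and by $y\mapsto by+b_{k+1}x^{k+1}+\dots+b_0$ on the chart; injectivity of the restriction follows since the scroll is nondegenerate, and the converse extension is read off from the parametrization, with $z_0$ fixed up to scale so that $H_0$ is preserved. Your route is more self-contained (it does not need \cite[Prop.~1.2]{sv} at this stage) at the cost of the explicit bookkeeping you carry out; the paper's citation is shorter and simultaneously records the orbit description of all trigonal curves of maximal Maroni invariant, not just the stabilizer. Both arguments also need, as you note, that the $G\times\CC^*$-action preserves $V^k$ and its regular locus, which your degree estimate for the substitution confirms.
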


\begin{proof}
We use \cite{sv} which describes orbits of curves on $\FF_{k+1}$ by the action 
on a fixed dense open $\CC^2\subset \FF_{k+1}$ by a group $\tilde G$ of 
birational transformations
\[
(x,y) \quad \mapsto \quad 
\left( \frac{ax+a_0}{a'x+a'_0}, 
\frac{by+b_0+b_1x+ \dots + b_{k+1}x^{k+1}}{(a'x+a'_0)^{k+1}} \right)
\]
with $(\begin{smallmatrix}a & a_0\\ a' & a'_0\end{smallmatrix})$ invertible, $b\in \CC^*$
and $b_0,\dots,b_{k+1} \in\CC$.

Their result \cite[Prop.1.2]{sv} implies that the projective equivalence classes
of trigonal canonical curves of positive Maroni invariant $k+1$
correspond bijectively to $\tilde G$-orbits of smooth curves on $\FF_{k+1}$ given
by some $f\in V^k$.

Now we identify the open $\CC^2\subset \FF_{k+1}$ with the complement of 
$H_0\cap S_{2k,k-1}$. Then the stabilizer of $(S_{2k,k-1},H_0)$ is identified
with the subgroup of $\tilde G$ corresponding to biregular transformations.
They are obtained for $a'=0$ which implies $a'_0\neq 0$. We may normalize to
$a'_0=1$ and get our claim.
\qed
\end{proof}

Let us state again, that the locus $\PP \hfami_g^{tri}$, $g=3k+1$, which we just
have given as a global quotient, decomposes
into the three loci,
\[
\PP \hfami_g^{tri}(6k),\quad
\PP \hfami_g^{tri}(4k,2k), \quad
\PP \hfami_g^{tri}(2k,2k,2k)
\]
corresponding to curves which intersect $L_0$ in $1,2$ and $3$ points
respectively, which is equal to the number of zeros of 
$sy^3+ r_{k+1}y^2+p_{2k+2}y+q_{3k+3}$.

%%%%%%%%%%%%%%%%%%%%%%%%%%%%%%%%%%
\section{transformation steps}

\newcommand{\inft}{\circ}

In this section we obtain a few preliminary results on transformations.

\begin{lemma}
\label{yrTschirn}
If $f\in V_k$ 
is regular,
then for $r_{\!\inft}\in\CC$ the transformation 
\[
x, y \quad \mapsto \quad x, y -\frac1{3s}
\left(r(x) - r_{\!\inft} x^{k+1}\right)
\]
maps $f$ to $f'$ with $r'(x)=r_{\!\inft}x^{k+1}$.

If moreover $\deg_x (r(x)-r_\inft x^{k+1}) < k$ then 
\[
p'_{2k+1}=p_{2k+1}, \quad p'_{2k+2}=p_{2k+2},
\quad q'_{3k+2}=q_{3k+2}, \quad q'_{3k+3}=q_{3k+3}.
\]
\end{lemma}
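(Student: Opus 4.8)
The plan is to compute directly how the substitution $y \mapsto y - \frac1{3s}(r(x) - r_{\!\inft} x^{k+1})$ acts on the coefficients of $f = sy^3 + r(x)y^2 + p(x)y + q(x)$, and to track the weighted degrees so as to stay inside $V^k$. Write $u(x) := \frac1{3s}\bigl(r(x) - r_{\!\inft}x^{k+1}\bigr)$, so the transformation is $y \mapsto y - u$. Since $f$ is regular we have $s \neq 0$, so $u$ is a well-defined polynomial in $x$; because $\deg r \leq k+1$ and we subtract off the top term $r_{\!\inft}x^{k+1}$, we have $\deg u \leq k+1$ in general, hence $x^i u^j$ has weighted degree $\leq i + (k+1)(j+1) \leq 3k+3$ whenever $x^i y^j$ appears in $f$ with $j \leq 2$; this is the bookkeeping that guarantees $f' \in V^k$ and that the transformation is in the group $G$ of the previous proposition.

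First I would expand $f' = s(y-u)^3 + r(y-u)^2 + p(y-u) + q$ and collect powers of $y$. The coefficient of $y^3$ is still $s$. The coefficient of $y^2$ is $-3su + r = -3s\cdot\frac1{3s}(r - r_{\!\inft}x^{k+1}) + r = r_{\!\inft}x^{k+1}$, which is exactly the asserted $r'(x) = r_{\!\inft}x^{k+1}$ — this is the whole first claim and it is essentially immediate by the choice of $u$. Then $p'(x) = 3su^2 - 2ru + p$ and $q'(x) = -su^3 + ru^2 - pu + q$.

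For the second claim, assume $\deg(r - r_{\!\inft}x^{k+1}) < k$, i.e. $\deg u \leq k-1$. Then I would bound the degrees of the correction terms: $\deg(3su^2) \leq 2(k-1) = 2k-2 < 2k+1$, and $\deg(2ru) \leq (k+1)+(k-1) = 2k < 2k+1$, so $p'(x) - p(x)$ has degree $\leq 2k$, whence $p'_{2k+1} = p_{2k+1}$ and $p'_{2k+2} = p_{2k+2}$. Similarly $\deg(su^3) \leq 3k-3$, $\deg(ru^2) \leq (k+1)+2(k-1) = 3k-1$, and $\deg(pu) \leq (2k+2)+(k-1) = 3k+1$; all three are $< 3k+2$, so $q'(x) - q(x)$ has degree $\leq 3k+1$, giving $q'_{3k+2} = q_{3k+2}$ and $q'_{3k+3} = q_{3k+3}$.

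There is no serious obstacle here; the only point requiring a little care is the degree bound $\deg(pu) \leq 3k+1$, which is the tightest of the estimates and is exactly what forces the hypothesis $\deg u \leq k-1$ (with $\deg u = k$ one would only get $3k+2$ and could disturb $q_{3k+2}$). I would also remark that regularity is used only to ensure $s \neq 0$ so that division by $3s$ is legitimate; the degree computations themselves are purely formal.
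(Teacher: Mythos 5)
Your proof is correct and follows essentially the same route as the paper's: the paper simply notes that this is the Tschirnhaus transformation in $y$ (well defined since regularity gives $s\neq0$) and that under the stated degree bound the coefficients of $p$ and $q$ are only affected in degrees below $2k+1$ and $3k+2$ respectively, which is exactly the explicit expansion and degree estimate you carry out. One tiny slip in your bookkeeping: the weighted-degree bound guaranteeing $f'\in V^k$ should be $i+(k+1)j\leq 3k+3$ (substituting $y\mapsto y-u$ with $\deg_x u\leq k+1$ cannot raise the weighted degree), not $i+(k+1)(j+1)$; equivalently, your remark that the substitution lies in the group $G$ already settles this point.
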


\begin{proof}
The transformation is well defined, since regularity of the curve implies
$s\neq0$. It is then simply the Tschirnhaus transformation for the polynomial in $y$
transforming the coefficients which are polynomials in $x$. 

If the given degree bound holds, it is easy to check that the polynomials $r,p,q$ are
only affected in degrees less than $k,2k+1,3k+2$ respectively.
\qed
\end{proof}

The following three results are proved by similar elementary arguments:

\begin{lemma}\label{ypTschirn}
If $f\in V_k$ with $q_{3k+3}=p_{2k+2}=0$ and $sr_{k+1}q_{3k+2}\neq 0$ 
then the transformation 
\[
x, y \quad \mapsto \quad x , 
y-\frac1{2}\,\frac{p_{2k+1}}{r_{k+1}}x^k
\]
maps $f$ to $f'$ with $q'_{3k+3}=p'_{2k+2}=0$, $s'r'_{k+1}q'_{3k+2}\neq 0$ and
\begin{eqnarray*}
p'_{2k+1} & = & 0.
\end{eqnarray*}
\end{lemma}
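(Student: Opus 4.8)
The plan is to treat this exactly as in Lemma~\ref{yrTschirn}: the map is the substitution $x\mapsto x$, $y\mapsto y-c\,x^{k}$ with $c=\tfrac12\,p_{2k+1}/r_{k+1}$, which is well defined because the hypothesis $s\,r_{k+1}\,q_{3k+2}\neq0$ forces $r_{k+1}\neq0$. This substitution is of the form occurring in the group $G$ (take $b=1$, $b_k=-c$, all other $b_j=0$), and since $x^{k}$ has weighted degree $k<k+1$ it does not raise the weighted degree of any monomial, so $f'$ again lies in $V_k$. Thus the whole argument is a direct expansion together with a degree count.

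First I would write out $f'=s(y-cx^k)^3+r(x)(y-cx^k)^2+p(x)(y-cx^k)+q(x)$ and collect powers of $y$, obtaining $s'=s$ and
\begin{align*}
r'(x) &= r(x)-3sc\,x^{k},\\
p'(x) &= p(x)-2c\,x^{k}r(x)+3sc^{2}x^{2k},\\
q'(x) &= q(x)-c\,x^{k}p(x)+c^{2}x^{2k}r(x)-sc^{3}x^{3k}.
\end{align*}
From this, $s'=s\neq0$ is immediate, and since $\deg_x(x^k r)=k+\deg r\le 2k+1<k+2+\,$\dots, in particular $3sc\,x^k$ affects $r$ only in degrees $<k+1$, so $r'_{k+1}=r_{k+1}\neq0$.

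Next comes the bookkeeping, which is the only point that needs care. Using $p_{2k+2}=0$ (so $\deg p\le 2k+1$) and $q_{3k+3}=0$ (so $\deg q\le 3k+2$): the correction terms for $p'$ have degrees $\le 2k+1$ and $2k$, giving $p'_{2k+2}=0$ and $p'_{2k+1}=p_{2k+1}-2c\,r_{k+1}$; by the choice of $c$ this is exactly $0$. The correction terms for $q'$ have degrees $k+\deg p\le 3k+1$, $2k+\deg r\le 3k+1$ and $3k$, all strictly below $3k+2$, so $q'_{3k+3}=0$ and — crucially — $q'_{3k+2}=q_{3k+2}\neq0$. Hence $s'r'_{k+1}q'_{3k+2}\neq0$, and all asserted equalities hold. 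There is no real obstacle here beyond keeping track of these degree bounds and noting that they rely precisely on the two vanishing hypotheses $p_{2k+2}=q_{3k+3}=0$; the remaining three lemmas flagged after Lemma~\ref{ypTschirn} admit the same template.
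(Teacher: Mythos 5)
Your proof is correct and follows exactly the route the paper intends: the paper dismisses this lemma with ``proved by similar elementary arguments'' (referring to the Tschirnhaus-substitution-plus-degree-count in Lemma~\ref{yrTschirn}), and your explicit expansion of $f(x,y-cx^k)$ with the degree bookkeeping is precisely that argument carried out in full, including the key point that the hypotheses $p_{2k+2}=q_{3k+3}=0$ are what keep the correction terms below degrees $2k+2$ and $3k+2$ so that $r'_{k+1}$ and $q'_{3k+2}$ survive. No gaps.
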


\begin{lemma}
\label{xrTschirn}
If $f\in V_k$ with $q_{3k+3}=p_{2k+2}=p_{2k+1}=0$ and $sr_{k+1}q_{3k+2}\neq 0$ 
then the transformation 
\[
x, y \quad \mapsto \quad x -\frac1{k}\,\frac{r_k}{r_{k+1}},y
\]
maps $f$ to $f'$ with $q'_{3k+3}=p'_{2k+2}=p'_{2k+1}=0$, $s'r'_{k+1}q'_{3k+2}\neq 0$ and
\begin{eqnarray*}
r'_k & = & 0.
\end{eqnarray*}
\end{lemma}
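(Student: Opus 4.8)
\emph{Proof proposal.} The plan is to dispatch this exactly as Lemmas \ref{yrTschirn} and \ref{ypTschirn}; it is in fact the easiest of the translation steps, because the substitution fixes $y$ and hence acts on $f=sy^3+r(x)y^2+p(x)y+q(x)$ coefficient polynomial by coefficient polynomial. First I would observe that the displayed map, a pure translation $x\mapsto x+a_0$ with $y\mapsto y$ for the indicated $a_0$, lies in the group $G$ introduced in the preceding proposition (take $a=b=1$ and all $b_i=0$); it is well defined since $r_{k+1}\neq0$ is part of the hypothesis $sr_{k+1}q_{3k+2}\neq0$. As an $x$-translation it preserves each of the degree bounds $\deg_x\leq 0,\,k+1,\,2k+2,\,3k+3$ that define $V^k$, so $f'\in V^k$ and only the coefficients need tracking.

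All the ``unchanged'' assertions are then immediate from degree bookkeeping, with no triangularity argument required. The scalar $s$ is fixed, $s'=s\neq0$; an $x$-translation fixes the degree in $x$ and the top coefficient of each of $r,p,q$, so $r'_{k+1}=r_{k+1}\neq0$, and since $q_{3k+3}=0$ while $q_{3k+2}\neq0$ one has $\deg_x q=3k+2$ with $q'_{3k+2}=q_{3k+2}\neq0$, whence $s'r'_{k+1}q'_{3k+2}\neq0$. Moreover $q_{3k+3}=0$ and $p_{2k+2}=p_{2k+1}=0$ say exactly $\deg_x q\leq 3k+2$ and $\deg_x p\leq 2k$, bounds stable under translation, so $q'_{3k+3}=p'_{2k+2}=p'_{2k+1}=0$.

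The only real computation concerns $r'(x)=r(x+a_0)$: the terms $r_jx^j$ with $j<k$ have degree $<k$, so the coefficient of $x^k$ in $r'$ is $r_k+(k+1)r_{k+1}a_0$, and $a_0$ is chosen precisely to annihilate it, i.e.\ $a_0=-\tfrac{r_k}{(k+1)r_{k+1}}$ — this is the Tschirnhaus step of Lemma \ref{yrTschirn} applied to $r$ regarded as a degree-$(k+1)$ polynomial in $x$ (so the printed $\tfrac1k$ should presumably read $\tfrac1{k+1}$); hence $r'_k=0$. I do not expect a genuine obstacle: unlike in Lemmas \ref{yrTschirn}--\ref{ypTschirn}, the shift of $y$ plays no role at all here, and the whole statement reduces to one expansion of $r(x+a_0)$. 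The only points deserving a word of care are that the translation fixes $s$ and the leading coefficients of $r$ and $q$, and that it respects the degree bounds $\deg_x p\leq 2k$, $\deg_x q\leq 3k+2$ imposed by the hypotheses.
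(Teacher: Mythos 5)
Your argument is correct and is precisely the elementary degree bookkeeping the paper intends, since it proves Lemmas \ref{ypTschirn}--\ref{xqTschirn} ``by similar elementary arguments'' to Lemma \ref{yrTschirn}: the translation lies in $G$, fixes $s$ and the top coefficients of $r$ and $q$, respects the degree bounds giving $p'_{2k+2}=p'_{2k+1}=q'_{3k+3}=0$, and the only computation is the coefficient of $x^k$ in the shifted $r$. You are also right that this coefficient is $r_k+(k+1)r_{k+1}a_0$, so the shift should carry the factor $\tfrac1{k+1}$ rather than the printed $\tfrac1k$ (compare the factor $\tfrac1{3k+2}$ in Lemma \ref{xqTschirn}, where $\deg_x q=3k+2$); this is a typo in the statement, not a flaw in your proof.
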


\begin{lemma}
\label{xqTschirn}
If $f\in V_k$ with $q_{3k+3}=p_{2k+2}=0$ and $sq_{3k+2}\neq 0$ 
then the transformation 
\[
x, y \quad \mapsto \quad x -\frac1{3k+2}\,\frac{q_{3k+1}}{q_{3k+2}}, y
\]
maps $f$ to $f'$ with $q'_{3k+3}=p'_{2k+2}=0$, $s'q'_{3k+2}\neq 0$ and
\begin{eqnarray*}
q'_{3k+1} & = & 0, \qquad p'_{2k+1} \quad = \quad p_{2k+1}.
\end{eqnarray*}
\end{lemma}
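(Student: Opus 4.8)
The plan is to mimic the proofs of Lemmas \ref{yrTschirn}--\ref{xrTschirn}: perform the indicated shift in the $x$-variable and track which coefficients are affected. First I would observe that the hypothesis $q_{3k+3}=p_{2k+2}=0$ together with $s q_{3k+2}\neq 0$ guarantees that $q(x)$ has exact degree $3k+2$, so the quantity $\frac{1}{3k+2}\frac{q_{3k+1}}{q_{3k+2}}$ is well defined, and the transformation $x\mapsto x-\frac{1}{3k+2}\frac{q_{3k+1}}{q_{3k+2}}$, $y\mapsto y$ is a biregular member of the group $G$ (it is the $a=b=1$, $a_0=-\frac{1}{3k+2}\frac{q_{3k+1}}{q_{3k+2}}$, all $b_i=0$ specialization), so it indeed acts on $V^k$ and on the regular locus.

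Next I would substitute and compute the new coefficient list. Writing $c=-\frac{1}{3k+2}\frac{q_{3k+1}}{q_{3k+2}}$, the substitution replaces $f(x,y)$ by $f(x+c,y)$, so $s'=s$ (the $y^3$-coefficient is a constant, unaffected), and each of $r',p',q'$ is obtained from $r,p,q$ by the shift $x\mapsto x+c$. The degrees are preserved, so $q'_{3k+3}=p'_{2k+2}=0$ and $s'q'_{3k+2}=s q_{3k+2}\neq 0$ persist. The binomial expansion gives $q'_{3k+2}=q_{3k+2}$ and $q'_{3k+1}=q_{3k+1}+(3k+2)c\,q_{3k+2}=q_{3k+1}-q_{3k+1}=0$, which is exactly the classical depressed-polynomial normalization (Tschirnhaus) applied to $q$ in the $x$-variable. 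For the claim $p'_{2k+1}=p_{2k+1}$: the shift $x\mapsto x+c$ changes the degree-$(2k+1)$ coefficient of $p$ only by a multiple of $p_{2k+2}$, and $p_{2k+2}=0$ by hypothesis; hence $p'_{2k+1}=p_{2k+1}$.

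I do not expect any genuine obstacle here — the statement is a bookkeeping exercise in how a linear change of the base variable acts on the Newton polygon of $f$, parallel to the three preceding lemmas. The only point requiring a word of care is the well-definedness of the shift, which rests on $q_{3k+2}\neq 0$; that is supplied by the hypothesis $sq_{3k+2}\neq0$. One should also note in passing that the transformation preserves the number of zeros of $f$ on the line $L_0$ (equivalently, of $sy^3+r_{k+1}y^2+p_{2k+1}y+q_{3k+2}$ — wait, of the leading form), since it is an honest automorphism of $\FF_{k+1}$ fixing $H_0\cap S_{2k,k-1}$, so applying it does not move a pair between the three strata $\PP\hfami_g^{tri}(6k)$, $\PP\hfami_g^{tri}(4k,2k)$, $\PP\hfami_g^{tri}(2k,2k,2k)$.
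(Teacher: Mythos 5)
Your argument is correct and is exactly the paper's (unwritten) argument: the paper dismisses Lemmas \ref{ypTschirn}--\ref{xqTschirn} as ``proved by similar elementary arguments'' to the Tschirnhaus computation of Lemma \ref{yrTschirn}, and your shift $x\mapsto x+c$ with $c=-\frac{1}{3k+2}\frac{q_{3k+1}}{q_{3k+2}}$, tracked coefficientwise using $q_{3k+3}=p_{2k+2}=0$ and $q_{3k+2}\neq0$, is precisely that computation. Only your closing aside is slightly off before you self-correct: the restriction of $f$ to $L_0$ is the weighted-leading form $sy^3+r_{k+1}y^2+p_{2k+2}y+q_{3k+3}$, not the polynomial you first wrote, but this plays no role in the lemma itself.
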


Also the next result is similar, though it needs some more care to show its
validity.

\begin{lemma}
\label{L0Tschirn}
If $f\in V_k$ with $r(x)=0$ and $\{f=0\}$ non-singular intersecting the
line $L_0$ at infinity in two points,
then the transformation 
\[
x, y \quad \mapsto \quad x, y -\frac{3q_{3k+3}}{2p_{2k+2}}x^{k+1}
\]
maps $f$ to $f'$ with $q'_{3k+3} =0=p'_{2k+2}$ and
\begin{eqnarray*}
r'(x) & = & r'_{k+1}x^{k+1},\quad r'_{k+1}\neq 0
\end{eqnarray*}
\end{lemma}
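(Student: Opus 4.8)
The plan is to mimic the Tschirnhaus argument of Lemma \ref{yrTschirn}, but since here the coefficient we divide by is $p_{2k+2}$ rather than $s$, the first task is to establish $p_{2k+2}\neq 0$ from the geometric hypotheses. The substitution $y\mapsto y-\lambda x^{k+1}$ with $\lambda = 3q_{3k+3}/(2p_{2k+2})$, applied to $f = sy^3 + 0\cdot y^2 + p(x)y + q(x)$, produces $f' = s(y-\lambda x^{k+1})^3 + p(x)(y-\lambda x^{k+1}) + q(x)$; expanding, the new $y^2$-coefficient is $r'(x) = -3s\lambda x^{k+1}$, so $r'(x) = r'_{k+1}x^{k+1}$ automatically, and $r'_{k+1} = -3s\lambda = -\tfrac{9sq_{3k+3}}{2p_{2k+2}}$, which is nonzero precisely when $q_{3k+3}\neq 0$ (using $s\neq 0$ from regularity, which the non-singularity of $\{f=0\}$ includes). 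The new $y^0$- and $y^1$-coefficients stay within the weighted-degree bound of $V^k$ because $x^{k+1}$ has weight $2(k+1)\leq 3k+3$ and $x^{2k+2}$, $x^{3k+3}$ have weights $\leq 3k+3$ as well. Finally, the top-degree part of $q'$: the monomial $x^{3k+3}$ in $q'$ receives a contribution $-s\lambda^3 x^{3k+3}$ from the cube term, $-p_{2k+2}\lambda x^{3k+3}$ from the linear term, and $q_{3k+3}x^{3k+3}$ from $q$; one checks these cancel exactly for the stated $\lambda$ (this is just the degree-$3k+3$ Tschirnhaus identity and is the routine computation I would not belabour), and similarly $p'_{2k+2}x^{2k+2}$ gets $-3s\lambda^2 x^{2k+2}$ from the cube and $p_{2k+2}x^{2k+2}$ from $p$, which also cancel for this $\lambda$ since $\lambda\cdot(-3s\lambda) = -3s\lambda^2 = -p_{2k+2}$ would require $\lambda = \sqrt{p_{2k+2}/(3s)}$... so in fact the vanishing of $p'_{2k+2}$ needs a separate check and is where one must be careful.

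The crux is therefore twofold. First, deriving $q_{3k+3}\neq 0$ (equivalently $p_{2k+2}\neq 0$, so that $\lambda$ is defined): the hypothesis that $\{f=0\}$ meets $L_0$ in exactly two points is exactly the statement that the degree-$3$ form in $y$ obtained by taking the top weighted-degree part of $f$ — namely $s y^3 + r_{k+1}y^2 + p_{2k+2}y + q_{3k+3}$ with $r_{k+1}=0$ here — has a double root and a simple root as a binary form. A cubic $sy^3 + p_{2k+2}y + q_{3k+3}$ with vanishing $y^2$-term has a repeated root iff its discriminant $-4sp_{2k+2}^3 - 27s^2q_{3k+3}^2$ vanishes; having exactly a double root (not a triple root) forces $p_{2k+2}\neq 0$ and $q_{3k+3}\neq 0$, and moreover pins down the double root to be $y_0 = -3q_{3k+3}/(2p_{2k+2}) = \lambda$. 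This is the content I would spell out: the shift $y\mapsto y - \lambda x^{k+1}$ is precisely translating the double point of the cubic-at-infinity to the point $y=\infty$-free locus, which is why both $q'_{3k+3}$ and $p'_{2k+2}$ must vanish — they are the value and first derivative of the top cubic at its double root $y_0$.

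So the clean way to organise the proof, avoiding the fragile coefficient bookkeeping, is: (1) observe $s\neq 0$ by regularity and write $\Phi(y) := sy^3 + p_{2k+2}y + q_{3k+3}$ for the binary form giving $\{f=0\}\cap L_0$; (2) use the two-point hypothesis to conclude $\Phi$ has a double root $y_0$ and a distinct simple root, whence $\Phi(y_0)=\Phi'(y_0)=0$, and solve: from $\Phi'(y_0) = 3sy_0^2 + p_{2k+2} = 0$ and $\Phi(y_0)=0$ one gets $y_0 = -3q_{3k+3}/(2p_{2k+2})$ after eliminating, and in particular $p_{2k+2}\neq 0$; (3) perform the substitution $y\mapsto y - y_0 x^{k+1}$, which by construction sends the top cubic $\Phi(y)$ to $\Phi(y+y_0) = sy^3 + 3sy_0 y^2 + \Phi'(y_0)y + \Phi(y_0) = sy^3 + 3sy_0 y^2$, reading off $q'_{3k+3}=0$, $p'_{2k+2}=0$, and $r'_{k+1} = 3sy_0 \neq 0$ (nonzero since $y_0\neq 0$, as $q_{3k+3}\neq 0$); (4) note the substitution preserves $V^k$ and only mixes coefficients downward in weighted degree, so $r'(x)$ has $\deg\le k+1$ with top coefficient $r'_{k+1}\neq 0$ and all lower terms as produced — but in fact $r(x)$ was $0$, so $r'(x) = -3s y_0 x^{k+1}$ exactly, giving $r'(x)=r'_{k+1}x^{k+1}$. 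The main obstacle is step (2): translating "non-singular curve meeting $L_0$ in two points" into the algebraic statement about $\Phi$ — one must argue that a simple transverse intersection at one point of $L_0$ plus a tangential (but still reduced, by smoothness of the total curve) intersection at another accounts for all three intersection points counted with multiplicity, which is what forces the root pattern $(2,1)$ and hence $p_{2k+2}, q_{3k+3}\neq 0$; a triple root would either mean a single point of contact of order $3$ or a singularity of $C_f$ on $L_0$, both excluded.
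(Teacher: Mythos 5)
Your reorganised argument (steps (1)--(4)) is essentially the paper's own proof: both restrict $f$ to $L_0$, use the two-point hypothesis to force the root pattern $(2,1)$ for $\Phi(y)=sy^3+p_{2k+2}y+q_{3k+3}$ (hence $p_{2k+2}\neq0$ and, via the vanishing discriminant $27sq_{3k+3}^2=-4p_{2k+2}^3$, also $q_{3k+3}\neq0$), locate the double root at $y_0=-3q_{3k+3}/(2p_{2k+2})$, and note that shifting it to $0$ makes the value and derivative of the top form vanish (so $q'_{3k+3}=p'_{2k+2}=0$) while producing $r'_{k+1}=\pm3sy_0\neq0$. The only thing to repair is sign bookkeeping: $y_0=-\lambda$, not $\lambda$, so the lemma's substitution $y\mapsto y-\lambda x^{k+1}=y+y_0x^{k+1}$ is precisely the one turning the top form into $\Phi(y+y_0)=sy^3+3sy_0y^2+\Phi'(y_0)y+\Phi(y_0)=sy^3+3sy_0y^2$ --- which is the computation you actually carry out in step (3), and the worry in your first paragraph about $p'_{2k+2}$ came only from writing $-3s\lambda^2$ where the cube term contributes $+3s\lambda^2=-p_{2k+2}$, so the cancellation does hold.
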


\begin{proof}
The curve intersects the line $L_0$ in the roots of 
\[
sy^3+p_{2k+2}y+q_{3k+3}
\]
thus $p_{2k+2}\neq0$, for otherwise the number of roots is either
one or three according to $q_{3k+3}$ equal to zero or not.
Hence the transformation is well defined. 

The vanishing discriminant implies $27q_{3k+3}^2s=-4p_{2k+2}^3$, thus $q_{3k+3}\neq0$ and one
can check that the polynomial
\[
-4p_{2k+2}^3y^3 + 27q_{3k+3}^2p_{2k+2}y+ 27q_{3k+3}^3
\]
has a simple root at $3q_{3k+3}/p_{2k+2}$ and a double root at $-3q_{3k+3}/2p_{2k+2}$.
Hence the given transformation yields a polynomial $f'$ with 
$r'_{k+1}=9sq_{3k+3}/2p_{2k+2}\neq0$ and a double root at $0$, so finally
$p'_{2k+2}=0=q'_{3k+3}$.
\qed
\end{proof}

We further need a non-vanishing result to be exploited later.

\begin{lemma}
\label{qnon-van}
Suppose $f\in V_k$ restricted to the line $L_0$ at infinity 
has a double or triple zero at $y=0$
and $\{f=0\}$ is non singular on $\FF_{k+1}$, then 
\[
q_{3k+2} \quad \neq \quad 0
\]
for the coefficient of $x^{3k+2}$ in the monomial expansion of $f$.
\end{lemma}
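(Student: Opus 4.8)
Let me think about Lemma 4.7 (the `qnon-van` statement). We have $f = s y^3 + r(x) y^2 + p(x) y + q(x) \in V^k$, with degree bounds $\deg r \le k+1$, $\deg p \le 2k+2$, $\deg q \le 3k+3$. The curve $C_f = \{f = 0\}$ is nonsingular on $\FF_{k+1}$, and its restriction to the fiber $L_0$ "at infinity" has a double or triple zero at $y = 0$. We must show the coefficient $q_{3k+2}$ of $x^{3k+2}$ in $q(x)$ is nonzero.

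**What "restricted to $L_0$" means.** From the earlier setup, the coordinate chart is $\CC^2 \to S_{2k,k-1}$, $(x,y) \mapsto (1:x:\dots:x^n:x:xy:\dots:x^m y)$ with $n = 2k$, $m = k-1$; the line $L_0$ is the fiber at infinity, and the restriction of the cubic-in-$y$ to $L_0$ is governed by the top-degree parts. Concretely, writing $x = 1/u$ and clearing denominators, the intersection of $C_f$ with $L_0$ is read off from the coefficients $s$, $r_{k+1}$, $p_{2k+2}$, $q_{3k+3}$ — these form the "leading cubic" $s y^3 + r_{k+1} y^2 + p_{2k+2} y + q_{3k+3}$ whose roots give the points of $C_f \cap L_0$ (this is exactly the viewpoint used in Lemmas 4.5 and 4.6). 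So the hypothesis "$f|_{L_0}$ has a double or triple zero at $y=0$" translates to: $q_{3k+3} = 0$ and $p_{2k+2} = 0$ (double zero at $0$, third root elsewhere if $r_{k+1} \ne 0$, triple zero at $0$ if also $r_{k+1} = 0$).

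**The plan.** First I would make the translation above precise: reduce to $q_{3k+3} = p_{2k+2} = 0$. Then I want to argue by contradiction: suppose also $q_{3k+2} = 0$. Now I examine the point $P_\infty$ on $L_0$ corresponding to $y = 0$ at infinity, i.e. the point with $x = \infty$, $y = 0$ in the chart — equivalently I pass to a local chart near $P_\infty$ on $\FF_{k+1}$. In coordinates $u = 1/x$ and an appropriate local fiber coordinate $w$ (roughly $w = y/x^{?}$ matching the twist, but more simply just $v = y$ with $u$ the base coordinate near $u = 0$), the curve $C_f$ near $P_\infty$ is cut out by the dehomogenized equation. I would write out the lowest-order terms of that local equation in $(u, y)$ and show that $q_{3k+3} = p_{2k+2} = q_{3k+2} = 0$ forces both partials to vanish at $(u,y) = (0,0)$, i.e. $P_\infty$ is a singular point of $C_f$, contradicting nonsingularity on $\FF_{k+1}$. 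The key computation: the terms of $f$ that survive (after the change $x = 1/u$ and clearing the correct power of $u$ dictated by the $3\sigma_0 + (2m-n+2)L$ class, here $3\sigma_0 + (k+1)L$, so clear $u^{3k+3}$) and that are linear or constant in the local coordinates come precisely from the coefficients $s$ (giving $y^3$, order $\ge 2$), $r_{k+1}$ (giving $u y^2$... order $\ge 2$), $p_{2k+2}$ (giving $u^2 y$, but if $= 0$ then from $p_{2k+1}$ we'd get $u^3 y$), $q_{3k+3}$ ($u^3$), $q_{3k+2}$ ($u^4$)... I need to track carefully which monomial is the genuine lowest-order one at $P_\infty$ and confirm that killing $q_{3k+3}, p_{2k+2}, q_{3k+2}$ removes every term of order $\le 1$, making $P_\infty$ singular.

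**The main obstacle and how I'd handle it.** The delicate point is getting the local model at $P_\infty$ right — i.e. correctly identifying which coefficients of $f$ contribute the constant and linear terms of the local equation on the Hirzebruch surface $\FF_{k+1}$ (not on $\CC^2 \subset \FF_{k+1}$, where $P_\infty$ isn't even visible). The twist by $\ofami(n) \oplus \ofami(m)$ means the gluing to the chart at infinity involves $x^{k+1}$-type factors in the $y$-coordinate, and one must be careful that "$q_{3k+2} \ne 0$" is genuinely the relevant nondegeneracy condition rather than, say, $q_{3k+3}$ alone or $p_{2k+1}$. I would pin this down by using the same dehomogenization recipe that is implicitly in force in Lemmas 4.5–4.6 (where $q_{3k+3}$ and $p_{2k+2}$ clearly control $C_f \cap L_0$), extended one order deeper. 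Once the correct local equation $\tilde f(u, \tilde y)$ is written with its Newton polygon, the conclusion is a one-line check: vanishing of $q_{3k+3}, p_{2k+2}, q_{3k+2}$ strips off the unique monomial that makes $\partial \tilde f / \partial u$ nonzero at the origin (the other partial $\partial \tilde f / \partial \tilde y$ being controlled by $p_{2k+2}$, already zero), so $P_\infty \in \operatorname{Sing}(C_f)$ — contradiction. Hence $q_{3k+2} \ne 0$.
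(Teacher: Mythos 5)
Your plan is exactly the paper's proof: the paper expands $f$ locally at the point $x=\infty$, $y=0$ of $\FF_{k+1}$ in the coordinates $x_0=1/x$ and the fiber coordinate, obtaining linear part $q_{3k+3}+p_{2k+2}\,\tilde y+q_{3k+2}\,x_0$, notes the first two coefficients vanish by the double/triple-zero hypothesis on $L_0$, and concludes $q_{3k+2}\neq0$ from nonsingularity of the gradient there. The only correction to your hedged bookkeeping: in the chart $u=1/x$, $\tilde y=yu^{k+1}$ one clears $u^{3k+3}$, so a monomial $x^iy^j$ becomes $\tilde y^j u^{3k+3-i-(k+1)j}$; hence $q_{3k+3}$ gives the constant term, $p_{2k+2}$ gives $\tilde y$, and $q_{3k+2}$ gives $u$ (not $u^3$, $u^2y$, $u^4$), which is precisely the outcome your final sentence anticipated, so the argument closes as you describe.
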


\begin{proof}
At $y=0$ on the line $L_0$, the linear local expansion of $f$ in $x_0=1/x$ and $y$ is
\[
f(x_0,y) \quad = \quad q_{3k+3} + p_{2k+2}y + q_{3k+2}x_0 + h.o.t.
\]
The first two coefficients are zero, since $x=\infty, y=0$ is a zero of $f$
respectively at least doubly so along $x=\infty$. In order to have non-vanishing
gradient at $x=\infty, y=0$ the last coefficient $q_{3k+2}$ must be non-vanishing.
\qed
\end{proof}

A final prerequisite concerns an alternative choice of basis for the $(\CC^*)^3$
subgroup of transformations.

\begin{lemma}
\label{cstarmat}
The actions of $(\CC^*)^3$ on $V^k$ given by respectively
\[
x^iy^j \quad \mapsto\quad a^ib^jc x^iy^j, \qquad
x^iy^j \quad \mapsto\quad \lambda^{6k+4-2i-2kj-j}\mu^{3k+3-i-kj-j}\varrho^{9k+6-3i-3kj-2j} x^iy^j
\]
are equal under a group automorphism.
\end{lemma}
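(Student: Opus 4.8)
The plan is to show that the two $(\CC^*)^3$-actions on $V^k$ differ by precomposition with an automorphism of the torus $(\CC^*)^3$, i.e.\ that there is a linear change of coordinates on the parameter torus carrying one weight system to the other. Concretely, each action is a diagonal representation of $(\CC^*)^3$ on the monomial basis $\{x^iy^j\}$, and such a representation is determined by the linear map sending a torus element to the tuple of exponents appearing on each monomial. For the first action the monomial $x^iy^j$ carries the character $(i,j,1)$ in the basis $(\log a,\log b,\log c)$; for the second it carries the character $(6k+4-2i-2kj-j,\;3k+3-i-kj-j,\;9k+6-3i-3kj-2j)$ in the basis $(\log\lambda,\log\mu,\log\varrho)$. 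So what I must exhibit is an element $M\in GL_3(\ZZ)$ (giving the claimed automorphism of $(\CC^*)^3$) such that applying $M$ to the affine-linear function $(i,j)\mapsto(i,j,1)$ produces the affine-linear function $(i,j)\mapsto(6k+4-2i-2kj-j,\dots)$.

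First I would write both weight functions as affine-linear maps $\ZZ^2\to\ZZ^3$ in the variables $(i,j)$: the first is $(i,j)\mapsto A_1\binom{i}{j}+v_1$ with $A_1=\begin{pmatrix}1&0\\0&1\\0&0\end{pmatrix}$, $v_1=\begin{pmatrix}0\\0\\1\end{pmatrix}$, and the second is $(i,j)\mapsto A_2\binom{i}{j}+v_2$ with $A_2=\begin{pmatrix}-2&-2k-1\\-1&-k-1\\-3&-3k-2\end{pmatrix}$, $v_2=\begin{pmatrix}6k+4\\3k+3\\9k+6\end{pmatrix}$. The claim that the two actions agree under a group automorphism of $(\CC^*)^3$ amounts to finding $M\in GL_3(\ZZ)$ with $M A_1 = A_2$ and $M v_1 = v_2$. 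Since the columns of $A_1$ together with $v_1$ form the standard basis of $\ZZ^3$, the matrix $M$ is forced to be exactly $M=\bigl(\text{col}_1(A_2)\mid\text{col}_2(A_2)\mid v_2\bigr)=\begin{pmatrix}-2&-2k-1&6k+4\\-1&-k-1&3k+3\\-3&-3k-2&9k+6\end{pmatrix}$, and the only thing left to check is that $\det M=\pm1$, so that $M\in GL_3(\ZZ)$ and hence defines an automorphism of the torus. This determinant is a short cofactor computation; I expect it to come out to $\pm1$ independently of $k$ (one can already sanity-check $k=1$ by hand).

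The second and cleaner way to package the same argument — and the one I would actually write up — is to directly exhibit the substitution. Set $a=\lambda^{-2}\mu^{-1}\varrho^{-3}$, $b=\lambda^{-2k-1}\mu^{-k-1}\varrho^{-3k-2}$, $c=\lambda^{6k+4}\mu^{3k+3}\varrho^{9k+6}$; this is the map $(\CC^*)^3\to(\CC^*)^3$ with matrix $M^{\mathsf T}$ (equivalently $M$, up to transpose conventions) in multiplicative notation. Substituting these expressions into $a^ib^jc$ and collecting exponents of $\lambda,\mu,\varrho$ reproduces, monomial by monomial, the weights of the second action — this is exactly the statement $M A_1=A_2$, $Mv_1=v_2$ read coordinatewise. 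Conversely, invertibility of $M$ over $\ZZ$ (the determinant check) gives the inverse substitution expressing $\lambda,\mu,\varrho$ in terms of $a,b,c$, so the correspondence is a genuine group isomorphism and the two $(\CC^*)^3$-actions on $V^k$ are conjugate by it.

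The main obstacle is entirely bookkeeping rather than conceptual: one must make sure the sign and transpose conventions are consistent (is the automorphism acting on torus coordinates or on characters?) and, crucially, verify $\det M=\pm1$ so that the inverse substitution also has integer exponents — without that, one only gets an isogeny, not an isomorphism. I do not anticipate any genuine difficulty here; the determinant is a $3\times 3$ evaluation and the rest is substitution and comparison of exponents on the finitely many monomials $x^iy^j$ spanning $V^k$.
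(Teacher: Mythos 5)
Your proposal is correct and takes essentially the same route as the paper: the paper's proof exhibits exactly the substitution $a\mapsto\lambda^{-2}\mu^{-1}\varrho^{-3}$, $b\mapsto\lambda^{-2k-1}\mu^{-k-1}\varrho^{-3k-2}$, $c\mapsto\lambda^{6k+4}\mu^{3k+3}\varrho^{9k+6}$ and appeals to invertibility over $\ZZ$ of the corresponding $3\times3$ matrix, just as you do. The only detail you leave implicit, the determinant, does come out independent of $k$ (it equals $-1$), so your argument closes as expected.
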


\begin{proof}
The group automorphism is given by
\[
a\mapsto\lambda^{-2i}\mu^{-i}\varrho^{-3i}
b\mapsto\lambda^{-2kj-j}\mu^{-kj-j}\varrho^{-3kj-2j}
c\mapsto\lambda^{6k+4}\mu^{3k+3}\varrho^{9k+6}.
\]
It is obviously a group homomorphism, it has an inverse, since the corresponding 3 by 3 matrix
is invertible over the integers. Finally it translates the first action to the second one.
\qed
\end{proof}

\begin{prop}
\labell{trans1}
If $f\in V^k$ and $\{f=0\}$ non-singular intersecting the
line $L_0$ at infinity in one point,
then in the $G$-orbit of $f$ intersects $V_6$ in the orbit of some
\[
f' \in V^k \quad \text{with} \quad
s=q_{3k+2}=1, r_0=\dots=r_{k+1}=p_{2k+2}=q_{3k+1}=q_{3k+3}=0
\]
by the residual action of $\CC^*$ via
\[
p_i \mapsto t^{6k+4-3i} p_i, \quad
q_i \mapsto t^{9k+6-3i} q_i
\]
\end{prop}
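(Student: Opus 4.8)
The plan is a chain of normalisations, each effected by a transformation already isolated in this section, begun from the remark that the hypotheses make $f$ regular: reading ``$\{f=0\}$ non-singular'' as non-singular on $\FF_{k+1}$, this is condition (i) of regularity, so in particular $s\neq0$. Recall that the number of points of $C_f\cap L_0$ equals the number of roots of the cubic $sy^3+r_{k+1}y^2+p_{2k+2}y+q_{3k+3}$; the assumption that this intersection is a single point therefore says this cubic has a triple root. First I would apply Lemma~\ref{yrTschirn} with $r_\circ=0$: this transformation lies in $G$ (it is $y\mapsto y-\tfrac1{3s}r(x)$, with $\deg r\le k+1$) and replaces $f$ by an $f_1$ with $r_1\equiv0$. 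Since every element of $G$ acts on $\FF_{k+1}$ as an automorphism carrying the fibre $L_0$ to itself, $C_{f_1}\cap L_0$ is still one point; but now the relevant cubic $sy^3+p^{(1)}_{2k+2}y+q^{(1)}_{3k+3}$ has no $y^2$ term, and a $y^2$-free cubic with a triple root must equal $sy^3$. Hence $p^{(1)}_{2k+2}=q^{(1)}_{3k+3}=0$ — this is the step that consumes the single-point-at-infinity hypothesis — leaving $\deg p^{(1)}\le2k+1$, $\deg q^{(1)}\le3k+2$, with $f_1$ still regular.

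Next, $f_1$ restricted to $L_0$ is $sy^3$, a triple zero at $y=0$, and $\{f_1=0\}$ is non-singular, so Lemma~\ref{qnon-van} gives $q^{(1)}_{3k+2}\neq0$. Now Lemma~\ref{xqTschirn} applies (its hypotheses $q_{3k+3}=p_{2k+2}=0$ and $sq_{3k+2}\neq0$ hold): the pure $x$-translation $x\mapsto x-\tfrac1{3k+2}\tfrac{q^{(1)}_{3k+1}}{q^{(1)}_{3k+2}}$, again in $G$ and leaving $y$ untouched so that $r\equiv0$ persists, yields $f_2$ with $q^{(2)}_{3k+1}=0$, still $p^{(2)}_{2k+2}=q^{(2)}_{3k+3}=0$ and $sq^{(2)}_{3k+2}\neq0$. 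Finally I would use the rank-$3$ torus inside $G\times\CC^*$ spanned by $x\mapsto ax$, $y\mapsto by$ and $f\mapsto cf$. By Lemma~\ref{cstarmat} its action is, in the basis $(\lambda,\mu,\varrho)$ there, $x^iy^j\mapsto\lambda^{6k+4-2i-2kj-j}\mu^{3k+3-i-kj-j}\varrho^{9k+6-3i-3kj-2j}x^iy^j$; one reads off that the coefficient $s$ of $y^3$ is scaled by $\lambda$ alone and the coefficient $q_{3k+2}$ of $x^{3k+2}$ by $\mu$ alone, so a unique pair $(\lambda,\mu)$ normalises $s=q_{3k+2}=1$ and the residual subgroup $(1,1,t)$ acts precisely by $p_i\mapsto t^{6k+4-3i}p_i$, $q_i\mapsto t^{9k+6-3i}q_i$. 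Scalings introduce no new monomials and preserve the vanishings already achieved, so the resulting $f'$ has exactly the asserted form. Conversely, if an element of $G\times\CC^*$ carries this $f'$ to another point of the same normalised locus then, $s$ being non-zero, its $y$-translation part must vanish, and, $q_{3k+2}$ being non-zero, its $x$-translation part must vanish; hence it lies in the torus and, fixing $s=q_{3k+2}=1$, in the residual $\CC^*$. So the orbit meets the normalised locus in exactly that $\CC^*$-orbit.

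The only genuinely substantive step is the first one: that killing $r$ by the Tschirnhaus transformation automatically forces $p_{2k+2}=q_{3k+3}=0$, which is where the single-point-at-infinity hypothesis is used and which relies on $G$ preserving the fibre $L_0$ so that the intersection pattern with $L_0$ is $G$-invariant. Everything afterwards is bookkeeping: checking the hypotheses of Lemmas~\ref{qnon-van}, \ref{xqTschirn} and~\ref{cstarmat} at the moment each is invoked, and ordering the transformations — the $y$-Tschirnhaus, then the $y$-independent $x$-translation, then the torus, which preserves all vanishing — so that no step disturbs an earlier normalisation. I expect the remaining care to go into getting the exponents in the residual torus action right, for which Lemma~\ref{cstarmat} is precisely designed.
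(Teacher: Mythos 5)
Your argument is correct and follows essentially the same route as the paper: kill $r$ by Lemma~\ref{yrTschirn} with $r_{\!\circ}=0$, use the single-point-at-infinity hypothesis to force $p_{2k+2}=q_{3k+3}=0$, invoke Lemma~\ref{qnon-van} to get $q_{3k+2}\neq0$, normalise $q_{3k+1}=0$ via Lemma~\ref{xqTschirn}, and then scale with the $(\CC^*)^3$ of Lemma~\ref{cstarmat} setting $\lambda=1/s$, $\mu=1/q_{3k+2}$ to leave exactly the stated residual $\CC^*$-action. Your extra verification that the orbit meets the normalised slice in precisely one $\CC^*$-orbit is a slightly more explicit rendering of the paper's one-line stabilizer remark, not a different method.
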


\begin{proof}
Since $\{f=0\}$ is non-singular,
by Lemma \ref{yrTschirn} with $r_{\!\inft}=0$ any $f$ of the claim transforms to a 
polynomial $f'$ with $r'(x)=0$. By assumption there is only one zero on $L_0$ for $f$, 
thus $y=0$ is a triple zero on $L_0$ for $f'$, $p'_{2k+2}=0$ and $q'_{3k+3}=0$.
Lemma \ref{qnon-van} then implies $q'_{3k+2}\neq0$.
By Lemma \ref{xqTschirn} we may then additionally impose $f'$ to have $q'_{3k+1}=0$.

The stabilizer group of the locus of such $f'$ is the abelian group $(\CC^*)^3$ of Lemma
\ref{cstarmat}. The action by $\lambda,\mu,\varrho$ on the coefficients of $f'$ is
\[
s \mapsto \lambda  s, \quad
p_i \mapsto \lambda^{4k+3-2i} \mu^{2k+2-i} \varrho^{6k+4-3i} p_i, \quad
q_i \mapsto \lambda^{6k+4-2i} \mu^{3k+3-i} \varrho^{9k+6-3i} q_i
\]
With $\lambda=1/s$ and $\mu=1/q_{3k+2}$ we arrive at $f'$ in the claimed set with
the residual action of $t=\varrho$ via $p_i \mapsto \varrho^{6k+4-3i} p_i, 
q_i \mapsto \varrho^{9k+6-3i} q_i$ as given in the claim.
\qed
\end{proof}

\begin{prop}
\labell{trans2}
If $f\in V^k$ and $\{f=0\}$ non-singular intersecting the
line $L_0$ at infinity in two points,
then in the $G$-orbit of $f$ there is a unique $f'\in V^k$ with
\[
s'=r'_{k+1}=q'_{3k+2}=1, \quad 
r'_0=\dots=r'_k=0, \quad p'_{2k+1}=p'_{2k+2}=q'_{3k+3}=0.
\]
\end{prop}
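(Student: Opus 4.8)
The plan is to produce the stated normal form by composing the Tschirnhaus-type substitutions of Lemmas \ref{yrTschirn}--\ref{qnon-van} in a carefully chosen order, and then to read off uniqueness from the fact that the freedom left in $G$ (together with scalar multiplication on $V^k$) is completely rigid.

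\emph{Existence.} Non-singularity of $\{f=0\}$ on $\FF_{k+1}$ gives $s\neq 0$, so Lemma \ref{yrTschirn} with $r_{\!\inft}=0$ lets us assume $r(x)=0$. The curve then meets $L_0$ in the zeros of $sy^3+p_{2k+2}y+q_{3k+3}$; having two such points forces this cubic to have a double root, and Lemma \ref{L0Tschirn} moves that root to $y=0$, producing $p_{2k+2}=q_{3k+3}=0$ and $r(x)=r_{k+1}x^{k+1}$ with $r_{k+1}\neq 0$. Lemma \ref{qnon-van} then yields $q_{3k+2}\neq 0$, hence $sr_{k+1}q_{3k+2}\neq 0$. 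Now I would apply, in this order: Lemma \ref{ypTschirn} to achieve $p_{2k+1}=0$ (which only adds an $x^{k}$-term to $r$), Lemma \ref{xrTschirn} to kill the resulting coefficient $r_k$ (keeping $p_{2k+1}=p_{2k+2}=q_{3k+3}=0$, but possibly reviving $r$-coefficients of degree $<k$), and finally Lemma \ref{yrTschirn} once more, this time with $r_{\!\inft}=r_{k+1}$ — admissible because now $\deg_x(r(x)-r_{k+1}x^{k+1})<k$ — whose second clause leaves $p_{2k+1},p_{2k+2},q_{3k+2},q_{3k+3}$ untouched while restoring $r(x)=r_{k+1}x^{k+1}$. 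At this stage $r(x)=r_{k+1}x^{k+1}$, $p_{2k+1}=p_{2k+2}=q_{3k+3}=0$, and $s,r_{k+1},q_{3k+2}$ are all nonzero. The remaining three leading coefficients are rescaled to $1$ by the dilations $x\mapsto ax$, $y\mapsto by$ and scalar multiplication on $V^k$: these act on $(s,r_{k+1},q_{3k+2})$ through a $3\times 3$ exponent matrix of determinant $\pm 1$ (as in the proof of Lemma \ref{cstarmat}), hence through an isomorphism of $(\CC^*)^3$, so there is a \emph{unique} such transformation carrying $(s,r_{k+1},q_{3k+2})$ to $(1,1,1)$, and dilations preserve every vanishing condition already in force. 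This produces the desired $f'$ in the orbit of $f$.

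\emph{Uniqueness.} Suppose $f'$ and $f''$ both lie in the slice and $f''=g\cdot f'$; then $g$ fixes the slice, hence each of its defining relations, and I would deduce successively that $g$ is the identity. Preserving $r(x)=x^{k+1}$ together with $s=1$ (a $y$-translation by $B(x)$ sends $r(x)$ to $r(x)+3sB(x)$) and $p_{2k+2}=0$ forces the $y$-translation part of $g$ to vanish; preserving the vanishing of the $x^{k}$-coefficient of $r$ then forces the $x$-translation to vanish; and preserving $s=r_{k+1}=q_{3k+2}=1$ forces the two dilations and the scalar to be trivial, by the determinant-$\pm1$ rigidity used above. Hence $f'=f''$.

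\emph{Main obstacle.} The delicate part is the interleaving in the existence step: each $y$- or $x$-shift used to clean up $p$ reintroduces lower-order terms into $r(x)$, so the substitutions must follow exactly the pattern first kill $r$, then $p_{2k+1}$, then $r_k$, then the remainder of $r$, and at every stage one must verify that the non-vanishing $sr_{k+1}q_{3k+2}\neq 0$ demanded by these lemmas still holds — which is precisely what Lemma \ref{qnon-van} together with the persistence clauses of Lemmas \ref{ypTschirn}, \ref{xrTschirn} and \ref{yrTschirn} guarantees. Uniqueness is then a comparatively soft rigidity statement, resting once more on the invertibility over $\ZZ$ exploited in Lemma \ref{cstarmat}.
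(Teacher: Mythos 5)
Your existence argument is exactly the paper's: the same chain of Lemmas \ref{yrTschirn} (with $r_{\!\inft}=0$), \ref{L0Tschirn}, \ref{ypTschirn}, \ref{xrTschirn}, \ref{yrTschirn} (with $r_{\!\inft}=r_{k+1}$), justified by Lemma \ref{qnon-van}, followed by normalizing $(s,r_{k+1},q_{3k+2})$ to $(1,1,1)$ by the three-torus; checking that the exponent matrix $\bigl(\begin{smallmatrix}0&3&1\\ k+1&2&1\\ 3k+2&0&1\end{smallmatrix}\bigr)$ has determinant $-1$ is the same content as the paper's reparametrized basis in Lemma \ref{cstarmat}, which makes the normalization (and the absence of residual torus action) immediate. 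That half is fine.

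The uniqueness step, however, contains a false intermediate claim: preserving $r(x)=x^{k+1}$, $s=1$ and $p_{2k+2}=0$ does \emph{not} force the $y$-translation part of $g$ to vanish. Concretely, the element of $G\times\CC^*$ given by $x\mapsto x$, $y\mapsto -y-\tfrac23 x^{k+1}$ together with the scalar $-1$ sends $f=y^3+x^{k+1}y^2+p(x)y+q(x)$ to $y^3+x^{k+1}y^2+p(x)y+\bigl(-q(x)+\tfrac23 p(x)x^{k+1}-\tfrac4{27}x^{3k+3}\bigr)$; it preserves $s=1$, $r(x)=x^{k+1}$ and even the whole of $p(x)$ (so also $p_{2k+1}=p_{2k+2}=0$), yet has nonzero $y$-translation. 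It is ruled out only by the $q$-normalizations: it sends $q_{3k+3}=0,\ q_{3k+2}=1$ to $q_{3k+3}=-\tfrac4{27},\ q_{3k+2}=-1$. (More generally, in your own bookkeeping the condition $p''_{2k+2}=0$ only yields $\gamma^2=\alpha^{2k+2}$, and the branch $\gamma=-\alpha^{k+1}$ gives $B=-\tfrac23\alpha^{k+1}x^{k+1}\neq0$ with $p$ untouched.) So your successive elimination must invoke $q_{3k+3}=0$ and $q_{3k+2}=1$ before concluding that the $y$-translation is trivial; with that repair the rigidity argument closes, and it then amounts to the paper's assertion that the stabilizer of the full locus (including the $q_{3k+3}=0$ condition) is the $(\CC^*)^3$ of Lemma \ref{cstarmat}, on which the chosen normalization leaves no residual action.
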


\begin{proof}
Similar to the last proof,
a succession of transformations according to Lemma \ref{yrTschirn} with $r_{\!\inft}=0$,
Lemma \ref{L0Tschirn}, Lemma \ref{ypTschirn}, Lemma \ref{xrTschirn} and 
Lemma \ref{yrTschirn} again but with $r_{\!\inft}=r_{k+1}$ is possible thanks to 
Lemma \ref{qnon-van}.
It yields $f'$ in the orbit of $f$ with 
\[
s'r'_{k+1}q'_{3k+2}\neq 0 \quad \text{and} \quad
r_0=\dots=r_k=0, \quad p_{2k+1}=p_{2k+2}=0.
\]
The stabilizer group of the locus of such $f'$ is again the abelian group $(\CC^*)^3$ of Lemma
\ref{cstarmat}. The action by $\lambda,\mu,\varrho$ on non-vanishing coefficients of $f'$ is
\[
s \mapsto \lambda  s, \quad
r_{k+1} \mapsto \varrho^{-1} r_{k+1}, \quad
q_{3k+2} \mapsto \mu q_{3k+2}
\]
With $\lambda=1/s$, $\mu=1/q_{3k+2}$ and $\varrho=r_{k+1}$ we arrive 
at $f'$ in the claimed set with no residual action.
\qed
\end{proof}

\begin{prop}
\labell{trans3}
If $f\in V^k$ and $\{f=0\}$ non-singular intersecting the
line $L_0$ at infinity in three points,
then the $G$-orbit of $f$ intersects $V_{2,2,2}$ in the orbit of some
\[
f' \in V^k \quad \text{with} \quad
s=1, r_0=\dots=r_{k+1}=0
\]
by the residual action of $(a,a_0,t) \in (\CC^*\rtimes\CC) \times \CC^*$ via
\[
p(x) \mapsto t^2 p(ax+a_0), \quad
q(x) \mapsto t^3 q(ax+a_0).
\]
\end{prop}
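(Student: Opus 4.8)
\textit{Proof sketch.} The plan is to mirror the proofs of Propositions \ref{trans1} and \ref{trans2}: bring $f$ into the sublocus $\{r(x)=0\}$ by a single Tschirnhaus transformation, then determine the subgroup of $G\times\CC^*$ stabilising that sublocus and use the scalar factor to normalise $s$. First I would observe that $\{f=0\}$ non-singular on $\FF_{k+1}$ means precisely that $f$ is regular; in particular $s\neq0$, which is in any case forced by the hypothesis that $\{f=0\}$ meets $L_0$ in three points --- these being the three distinct roots of $sy^3+r_{k+1}y^2+p_{2k+2}y+q_{3k+3}$. Lemma \ref{yrTschirn} therefore applies with $r_{\!\inft}=0$ and moves $f$, within its $G$-orbit, to some $f'$ with $r'(x)=0$, i.e.\ $r'_0=\dots=r'_{k+1}=0$, so the orbit meets $V_{2,2,2}$. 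The transformation used lies in $G$ and hence is a biregular self-map of $\FF_{k+1}$ fixing the fibre $L_0$, so $\{f'=0\}$ is still non-singular and still meets $L_0$ in three points.

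Next I would compute the stabiliser of $\{r(x)=0\}$ inside $G$. Applying $x\mapsto ax+a_0$, $y\mapsto by+B(x)$ with $B(x)=b_{k+1}x^{k+1}+\dots+b_0$ to $sy^3+p(x)y+q(x)$, the new coefficient of $y^2$ is $3sb^2B(x)$; since $s\neq0$ it vanishes identically exactly when $B\equiv0$. Hence the stabiliser is the subgroup $\{x\mapsto ax+a_0,\ y\mapsto by\mid a,b\in\CC^*,\ a_0\in\CC\}\cong(\CC^*\rtimes\CC)\times\CC^*$.

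Adjoining the scalar factor $\CC^*$ acting on $V^k$, the coefficients of $f'$ transform under $(a,a_0,b,\l)$ by
\[
s\ \mapsto\ \l b^3 s,\qquad
p(x)\ \mapsto\ \l b\,p(ax+a_0),\qquad
q(x)\ \mapsto\ \l\,q(ax+a_0).
\]
Taking $\l=(b^3s)^{-1}$ normalises $s$ to $1$ and expresses $\l$ through $b$; setting $t=b^{-1}$, the residual action of $(a,a_0,t)\in(\CC^*\rtimes\CC)\times\CC^*$ becomes $p(x)\mapsto t^2p(ax+a_0)$, $q(x)\mapsto t^3q(ax+a_0)$, which is the claim. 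A dimension count confirms that nothing further is identified: $\dim(G\times\CC^*)=k+6$, of which the $k+2$ directions $b_0,\dots,b_{k+1}$ are spent reaching $\{r=0\}$ and one more on the $s$-normalisation, leaving precisely the stated $3$-dimensional residual group.

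The computations involved are straightforward coefficient bookkeeping under the coordinate changes. The one place that --- as already in the proof of Lemma \ref{L0Tschirn} --- calls for a little care is checking that the first-step shearing term $B(x)=-r(x)/(3s)$, which can have degree up to $k+1$, still defines a biregular self-map of $\FF_{k+1}$ fixing $L_0$, so that the three-point condition really is preserved and the passage to $V_{2,2,2}$ within the $G$-orbit is legitimate; but this is immediate from the way $G$ was set up.
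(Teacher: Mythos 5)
Your proposal is correct and follows essentially the same route as the paper: a single application of Lemma \ref{yrTschirn} with $r_{\!\inft}=0$ to reach $r'(x)=0$, identification of the stabiliser of that slice as the transformations $x\mapsto ax+a_0$, $y\mapsto by$ together with the scalar factor, and then using the scalar to normalise $s=1$, leaving exactly the stated residual $(a,a_0,t)$-action with weights $t^2$ on $p$ and $t^3$ on $q$. Your extra details (the $3sb^2B(x)$ computation for the stabiliser and the dimension count) only make explicit what the paper asserts.
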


\begin{proof}
By a transformation according to Lemma \ref{yrTschirn} with $r_{\!\inft}=0$ the
polynomial $f'$ is obtained. The stabilizing subgroup can be identified with
transformations induced by 
\[
x \mapsto a x + a_0
\]
and the action of a scalars $\lambda,t\in \CC^*$ via
\[
s \mapsto \lambda  s, \quad
p_i \mapsto \lambda t^2 p_i, \quad
q_i \mapsto \lambda t^3 q_i
\]
With $\lambda=1/s$ we arrive at the form of $f'$ of the claim and the residual 
action there.
\qed
\end{proof}

Let us note, that the three propositions make statements about the slices
of $V^k$ given by the affine subspaces
\begin{eqnarray*}
V_1^k & = & 
\{\, f \in V^k \mid   s=q_{3k+2}=1 , r=0, p_{2k+2}=0, q_{3k+3}=0\}
\\
V_{2}^k & = & 
\{\, f \in V^k \mid   s=r_{k+1}=q_{3k+2}=1 , r_0=\dots=r_k=
p_{2k+1}=p_{2k+2}= q_{3k+3}=0\}
\\
V_{3}^k & = & 
\{\, f \in V^k \mid   s=1, r_0=\dots=r_{k+1}=0 \}
\end{eqnarray*}

%%%%%%%%%%%%%%%%%%%%%%%%%%%%%%%%%%
\section{proof of the main theorem}

Next we apply the results of the previous section to a proof
of our main theorem.
\medskip

\begin{proof}[ of Theorem \ref{orbi}]
Let us start to address the first claim about the global quotient for 
$\PP\hfami^{tri}_{3k+1}(6k)$.

At the end of section 2 we established the identification with the
global quotient
\[
\left\{
f\in V^k \text{ regular} \mid C_f \text{ on } \FF_{k+1} \text{ intersects $L_0$ in one point}
\right\}
/ G\times \CC^*.
\]
In particular, the hypotheses of Prop.\ref{trans1} are met by all the $f$ involved.

The transformations used in the proof of Prop.\ref{trans1} can be performed 
simultaneously on all elements $f$, since they depend algebraically on the
coefficients of $f$.

Hence we get the global quotient description of $\PP\hfami^{tri}_{3k+1}(6k)$ by
\[
\left\{
f\in V^k \text{ regular} \mid 
s=q_{3k+2}=1, r(x)=0, p_{2k+2}=q_{3k+1}=q_{3k+3}=0
\right\}
/ \CC^*
\]
Define the linear subsystem $\lfami_1$ of $|3\sigma_0|$ by
\[
\lfami_1 \quad = \quad 
\{ [f] \in\PP V^k  \mid  s=q_{3k+2},\, r(x)=0,\, p_{2k+2}=q_{3k+1}=q_{3k+3}=0\,\}.
\]
Then the affine subspace
\[
\{ f \in V^k  \mid  s=q_{3k+2}=1,\, r(x)=0,\, p_{2k+2}=q_{3k+1}=q_{3k+3}=0\,\}
\]
maps bijectively to the complement of the hyperplane $s=0$ in $\lfami_1$.
Since this hyperplane is contained in the discriminant $\dfami_1$ of $\lfami_1$,
that map induces an isomorphism
\[
\{ f \in V^k \text{ regular}  \mid  s=q_{3k+2}=1,\, r(x)=0,\, p_{2k+2}=q_{3k+1}=q_{3k+3}=0\,\}
\quad\cong\quad
\lfami_1-\dfami_1.
\]
We transfer the $\CC^*$-action on the left hand side through this isomorphism to
the right hand side to get the claimed global quotient for $\PP\hfami^{tri}_{3k+1}(6k)$:
\[
(\lfami_1-\dfami) / \CC^*.
\]

The analogous strategy yields $\PP\hfami^{tri}_{3k+1}(4k,2k)\cong(\lfami_2-\dfami)$ in the second case with
\[
\lfami_2 \quad = \quad 
\{ [f] \in\PP V^k  \mid  s=q_{3k+2}=r_{k+1},\, r_0=0,\dots,r_k=0,\, p_{2k+1}=p_{2k+2}=q_{3k+3}=0\,\}
\]
and with $\lfami_3 = \{ [f] \in\PP V^k  \mid  r(x)=0\,\}$ yields
\[
\PP \hfami^{tri}_{3k+1} \quad \cong \quad 
(\lfami^k_{3} -\dfami_3 ) / \CC\rtimes (\CC^*)^2.
\]
To get the third case, it suffices then to additionally discard the divisor $\dfami'$ of pairs belonging
to $\PP\hfami^{tri}_{3k+1}(4k,2k)\stackrel{\cdot}{\cup}\PP\hfami^{tri}_{3k+1}(6k)$.
\qed
\end{proof}

%%%%%%%%%%%%%%%%%%%%%%%%%%%%%%%%%%
\section{orbifold fundamental groups and genus $4$}

In this final section we address the proofs of the corollaries from the introduction
concerning the orbifold fundamental group of trigonal loci and the special situation
of genus $4$ curves.
\medskip

The orbifold fundamental group for an orbifold given as a global quotient of
a manifold $X$ by a topological group $G$ is by definition the topological
fundamental group of the quotient
\[
(EG \times X) / G
\]
where $G$ acts freely on the contractible space $EG$ and 
diagonally on the product.

Then there is a long exact homotopy sequence associated to the diagonal free action
\[
\pi_2 \big((EG \times X) / G\big) \to \pi_1 G \to \pi_1 (EG \times X) \to
\pi_1 \big((EG \times X) / G\big) \to \pi_0 G
\]
in which we may identify $\pi_1 (EG \times X)=\pi_1 X$ and $\pi_1 \big((EG \times X) / G\big)
=\pi_1^{orb}(X / G)$.

\medskip

\begin{proof}[ of Corollary 1.3]
The moduli space $\PP \hfami^{tri}_{3k+1}(6k)$ was shown to be representable as a quotient of the 
complement $U_1^k:=V^k_1-\dfami$ of the discriminant in $V_1^k$ 
by the group $\CC^*$.
Hence we get from the long exact homotopy sequence above
\[
\ZZ \quad\to\quad \pi_1 U_1^k \quad\to\quad
\pi_1^{orb} (U_1^k / \CC^*) \quad\to\quad 1
\]
The map from $\ZZ=\pi_1\CC^*$ is injective, since so is the map to $H_1 U_1^k$, which is
essentially multiplication by the non-torsion homology class 
represented by any free orbit of $S^1\subset \CC^*$.

The affine space $V_1^k$ is an unfolding of the singularity given by the polynomial $y^3 + x^{3k+2}$,
namely the unfolding over the vector space spanned by the monomials $x^iy^j$ of weighted degree
$3i+(3k+2)j<9k+6$.
By the arguments given in \cite[section 3.2]{mir} 
the fundamental group of the discriminant complement $U_1^k$ 
is naturally isomorphic to that of the discriminant complement of the universal
unfolding of $y^3 + x^{3k+2}$. Thus by definition, it is the discriminant knot group 
$\pi^K(y^3 + x^{3k+2})$.

Hence we get the short exact sequence of the claim.
\qed 
\end{proof}
\medskip

\begin{remark}
A finite presentation for $\pi^K(y^3+x^{3k+2})$ was given in \cite{intern}
and with the argument in \cite{mir} it can also be given with
generators $t_i$ $i=1,..., 6k+2$
and relations
\begin{enumerate}
\item
of braid type : $t_i t_j t_i = t_j t_i t_j$ for all $i,j$ with $|i-j|\leq 2$
\item
of commutation type : $t_i t_j = t_j t_i$ for all $i,j$ with $|i-j|> 2$
\item
of triangle type: $t_i t_{i+1} t_{i+2} t_i = t_{i+1} t_{i+2} t_i t_{i+1}$ for all $i<6k+1$
\end{enumerate}
The central element which has to be factored can be expressed as
\[
(t_1 \dots t_{6k+2})^{9k+6}
\]
\end{remark}

\begin{proof}[ of Corollary 1.4]
The moduli space $\PP \hfami^{tri}_{3k+1}(2k,2k,2k)$ was shown to be representable as a 
quotient of the complement $U_{3}^k:=V^k_3-\dfami-\dfami'$ of $\dfami\cup\dfami'$ in $V_{3}^k$ 
by the group $\CC \rtimes (\CC^*)^2$.
Hence we get from the long exact homotopy sequence above
\[
\ZZ^2 \quad\to\quad \pi_1 U_3^k \quad\to\quad
\pi_1^{orb} U_3^k / \CC^* \quad\to\quad 1
\]

The map from $\ZZ^2=\pi_1(\CC^*)^2$ is injective, since so is the map to $H_1 U^k_3$:
Indeed, linking with the divisors $\dfami$ and $\dfami'$ gives a map $H_1 U^k_3\to\ZZ^2$
such that the composition  $\ZZ^2=\pi_1(\CC^*)^2\to H_1 U^k_3\to\ZZ^2$ is injective.

Moreover, we need to identify 
\[
\pi_1 U_{3}^k  \quad \cong \quad
\pi^K(y^3+x^{3k+3}) \rtimes \pi^K(y^3+x^2)
\]
This is possible since the map 
\begin{eqnarray*}
U_{3}^k  & \to & \{ Y^3 + p Y + q \mid 4p^3\neq -27q^2 \}
\\
f & \mapsto & Y^3 + p_{2k+2}Y + q_{3k+3}
\end{eqnarray*}
is close enough to a fibre bundle.
In fact, by the results in \cite{bries}, each point in the base has a topological
disc neighbourhood, such that the preimage has fundamental group
$\pi^K(y^3+x^{3k+3})$, the discriminant knot group of the fibre over 
$Y^3+1$.

The base space is a versal unfolding of the polynomial $Y^3$ which
defines a simple singularity.
In particular, it has fundamental group the discriminant knot group $\pi^K(y^3+x^2)$.
Since its universal covering space is contractible the second homotopy group
is trivial.

By the above, there is a short exact sequence coming from a longer exact homotopy
sequence:
\[
1 \quad \to \quad
\pi^K(y^3+x^{3k+3}) \quad \to \quad
\pi_1 U_{3}^k \quad \to \quad
\pi^K(y^3+x^2)
\quad \to \quad 1
\]
There exists a section for the topological spaces given by the map 
\[
Y^3 + p Y + q \quad \mapsto \quad y^3 + px^{2k+2}y + q x^{3k+3} + 1
\]
accordingly the group $\pi_1 U_{3}^k$ above in the middle is a semi-direct product of the
other two as claimed.
\qed
\end{proof}

\begin{remark}
A finite presentation for the two factors is again known, but how the quotient acts
on the normal factor is not known.
\end{remark}
\medskip

For the proof of the final corollary \ref{pezzo} we note that $\CC^8-\dfami_{E_8}$ is 
$\CC^*$-equivariantly equal to $V_1^1-\dfami$ since
\[
V_1^1 \quad = \quad y^3+x^5 + \span \{1,x,x^2,x^3,y,yx,yx^2,yx^3\}
\]
In particular, both have fundamental group $\pi^K(y^3+x^5)=Ar(E_8)$ and the 
generator of $\pi_1(\CC^*)$ maps to the generator of the center.
By the proof of the theorem $\PP \hfami^{tri}_4(6)\cong(V_1^1-\dfami)/\CC^*$,
so the corollary follows as soon as the following lemma is proven.

\begin{lemma}
\[
\hsix \quad = \quad \hfami^{tri}_{4}(6)
\]
\end{lemma}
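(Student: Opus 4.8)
The plan is to unwind the two definitions and check that, for a pair $(C,\varphi)$ with $\varphi$ having a single zero of multiplicity $6$, they impose exactly the same condition. Both $\hsix$ and $\hfami^{tri}_4(6)$ sit inside the stratum $\hfami_4(6)$, so I would fix such a pair, write $(\varphi)=6p$ for the zero $p$, and compare. First I would invoke the classical fact that every non-hyperelliptic curve of genus $4$ is trigonal (its canonical model lies on a, possibly singular, quadric in $\PP^3$ whose ruling cuts out a $g_3^{\,1}$). Thus condition (1) of Definition \ref{def_strict} is equivalent to non-hyperellipticity of $C$, which is exactly the hypothesis built into $\hsix$. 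So the whole statement reduces to: for $C$ non-hyperelliptic of genus $4$ with $(\varphi)=6p$, the spin structure $\tfrac12(\varphi)$ is even if and only if $D=6p$ is an integral multiple of a divisor in a $g_3^{\,1}$.

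The substance is a one-line Riemann--Roch computation. Since $(\varphi)\sim K_C$, the divisor $\theta:=3p$ is a theta characteristic, and by definition $\tfrac12(\varphi)$ is even exactly when $h^0(C,3p)$ is even. By Riemann--Roch $h^0(C,3p)=h^0(C,K_C-3p)$; this is $\ge 1$ because $3p$ is effective, so $3p$ is special and Clifford's inequality gives $h^0(C,3p)\le 2$. Hence $1\le h^0(C,3p)\le 2$, so even-ness of the spin structure is equivalent to $h^0(C,3p)=2$, i.e.\ to $|3p|$ being a pencil. On a non-hyperelliptic curve such a pencil is base-point free (a base point would leave a $g_2^{\,1}$ and force $C$ hyperelliptic), hence it is a trigonal linear system $g_3^{\,1}$; conversely any $g_3^{\,1}$ containing $3p$ has $h^0=2$.

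Finally I would match this with condition (2) of Definition \ref{def_strict}. A degree count shows $6p$ can be written as an integral multiple $mL$ of a degree-$3$ divisor $L$ in a $g_3^{\,1}$ only for $m=2$ and $L=3p$; so condition (2) for $(C,\varphi)$ says exactly that $3p$ belongs to a $g_3^{\,1}$, which by the previous paragraph is exactly that $\tfrac12(\varphi)$ is even. Putting the three paragraphs together, $(C,\varphi)\in\hsix$ if and only if $(C,\varphi)\in\hfami^{tri}_4(6)$, which is the claim.

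I do not expect a genuine obstacle here: once the definitions are spelled out, the only content is the bound $1\le h^0(C,3p)\le 2$ and the base-point-freeness remark, both standard. The minor points that want a little care are the convention that the parity of a theta characteristic $\theta$ is $h^0(\theta)\bmod 2$, and the degree count fixing $m=2$. As a consistency check, the lemma forces every $C$ occurring in $\hsix$ to have maximal Maroni invariant (compatibly with the Proposition of Section 2): equivalently, on a genus-$4$ curve whose canonical model lies on a smooth quadric, a point $p$ with $6p\sim K_C$ always yields the odd spin structure, so such points contribute to the remaining non-hyperelliptic component of $\hfami_4(6)$ rather than to $\hsix$.
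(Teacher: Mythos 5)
Your proof is correct, and it takes a somewhat different (and more elementary) route than the paper. You work entirely on the curve: parity of the theta characteristic $3p$ equals $h^0(3p)\bmod 2$, Riemann--Roch plus Clifford pin down $1\le h^0(3p)\le 2$, so evenness is equivalent to $|3p|$ being a pencil, which is base-point free on a non-hyperelliptic curve and hence a $g_3^{\,1}$; together with the observation that every non-hyperelliptic genus-$4$ curve is trigonal and the degree count forcing $D=2\cdot 3p$, this gives both inclusions at once. The paper instead obtains the inclusion $\hfami^{tri}_4(6)\subset\hsix$ from a cohomology computation on the Hirzebruch surface, showing $h^0(kL|_C)=k+1$ via two short exact sequences of sheaves on $\FF_{k+1}$ -- an argument that works for all $k$ and simultaneously justifies the parity claim ($\equiv g\bmod 2$) made in the introduction for the general trigonal loci -- and proves the converse by passing through the thetanull divisor and the quadric cone, whose unique $g_3^{\,1}$ must contain the effective even theta characteristic $3p$. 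Your curve-intrinsic argument is shorter and self-contained for the case $k=1$ at hand, while the paper's surface-level computation buys the statement needed elsewhere for arbitrary $k$; your handling of the two small pitfalls (parity convention, base-point-freeness, the degree count fixing $m=2$) is sound, and your closing remark that a smooth-quadric genus-$4$ curve with $6p\sim K_C$ always gives the odd spin structure is a correct consistency check against the Maroni-invariant proposition of Section 2.
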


\proof
Let us first say something about even and odd which was already mentioned in the introduction:
For pairs $(C,D)$ in the strictly trigonal loci
the canonical divisors $D=2kL\big|_C$ come with a theta-characteristic
$kL\big|_C$ of parity equal to that of
\[
h^0(kL\big|_C)\quad=\quad h^0(\ofami_C(kL))\quad=\quad k+1 \quad\cong_2\quad g \mod 2.
\]
Indeed $h^0(kL)=k+1 + h^1(\ofami_\FF(kL-C))$ from 
the long exact cohomology sequence of
\[
\begin{array}{cccccccccc}
0 & \to & \ofami_\FF(kL-C) & \to & \ofami_\FF(kL)  & \to & \ofami_C(kL) & \to & 0
\end{array}
\]
and $h^1(\ofami_\FF(kL-C))$ vanishes by 
the long exact cohomology sequence of
\[
\begin{array}{cccccccccc}
0 & \to & \ofami_\FF(-E) & \to & \ofami_\FF  & \to & \ofami_E & \to & 0
\end{array}
\]
where $E=2\sigma_0+\sigma_\infty+L\in |C-kL|$ is effective and connected.
\medskip

In the case at hand $g=4$ therefore every pair $C,D$ on the right determines
an even theta-characteristic and thus also belongs to the left hand side.
Conversely every pair $C,D$ in the set on the left
has a curve $C$ with an effective theta characteristic. Hence $C$ belongs to the thetanull
divisor of genus $4$ curves. $C$ is not hyperelliptic, since every hyperelliptic curve with
a $6$-uple canonical divisor is in the disjoint stratum $\hfami^{hyp}_4(6)$.
Therefore $C$ maps canonically to a trigonal curve on the quadric cone
and is a trigonal curve with a unique $g_3^{\,1}$. 

Consider now the effective canonical divisor $D$: By assumption it is the double
$D=2D'$ of an effective divisor $D'$ which is an even theta-characteristic.
Therefore $D'$ has degree $3$ and being effective and even implies that
$D'$ belongs to the unique $g_3^{\,1}$ of $C$.

We may thus conclude, that $(C,D)$ belongs to the right hand side as well.
\qed
\endproof

Similar claims are not true for larger $k$ simply for dimension reasons: 
The dimension of $\hfami(2g-2)$ is $2g-1=6k+1$ and so is the dimension of its
image in the moduli space. On the other hand, the dimension of the locus
of trigonal curves with maximal Maroni invariant in genus $g=3k+1$ is
$5k+3$ and those with a total ramification point form a divisor in it of
dimension $5k+2$.
\bigskip

%%%%%%%%%%%%%%%%%%%%%%
\paragraph{\bf Acknowledgements}
The author wants to acknowledge the ICERM at Brown University for the invitation to
the conference on Braids in Symplectic and Algebraic Geometry
and its stimulating atmosphere. 
He thanks Fabrizio Catanese, Stephen Coughlan and Eduard Looijenga 
for interesting and clarifying discussions.

%%%%%%%%%%%%%%%%%

\end{document}